\newtheorem{definition}{Definition}[section]
\newtheorem{theorem}{Theorem}[section]
\newtheorem{lemma}[theorem]{Lemma}
\newtheorem{corollary}[theorem]{Corollary}
\newtheorem{proposition}[theorem]{Proposition}
\theoremstyle{remark}
\newtheorem{remark}[theorem]{Remark}
\DeclareMathOperator{\I}{|||}
\newcommand{\RR}{\mathbb{R}}
\DeclareMathOperator{\Div}{div}
\begin{document}
%%%%%%%%%%%%%%%%%%%%%%%%%%%%%%%%%%%%%%%%%%%%%%%%%%%%%%%%%%%%%%%%%%%%%%%%%%%%%%%%%%%%%%%%%%%%%%%%%%%%
\title[Time-dependent singularities in the Navier-Stokes system]{Time-dependent singularities\\ in the Navier-Stokes system}

\author[G. Karch]{Grzegorz Karch}
\address[G. Karch]{Instytut Matematyczny, Uniwersytet Wroc\l awski,
 pl.~Grunwaldzki 2/4, 50-384 Wroc\l aw, Poland.}

\email{karch@math.uni.wroc.pl}

\author[X. Zheng]{Xiaoxin Zheng}

\address[X. Zheng]{Instytut Matematyczny, Uniwersytet Wroc\l awski,
pl.~Grunwaldzki 2/4, 50-384 Wroc\l aw, Poland.}

\email{xiaoxinyeah@163.com}

\date{\today}
\subjclass[2000]{35Q30, 76D05; 35B40.}
\keywords{Navier-Stokes system, incompressible fluid, time-dependent singularity, Slezkin-Landau solutions.}

\begin{abstract}
We show that, for a given  H\"older continuous curve in 
$\{(\gamma(t),t)\,:\, t>0\} \subset  \RR^3\times\RR^+$, there exists a  solution to the Navier-Stokes system for an incompressible fluid in $\RR^3$ which is smooth outside this curve and singular on it.
This is a pointwise solution of the system  outside  the curve, however, as a distributional solution on $\RR^3\times\RR^+$, it solves an analogous  Navier-Stokes system with a singular force concentrated on the curve.

\end{abstract}

\maketitle
%%%%%%%%%%%%%%%%%%%%%%%%%%%%%%%%%%%%%%%%%%%%%%%%%%%%%%%%%%%%%%%%%%%%%%%%%%%%%%%%%%%%%%%%%%%%%%%%%%%
\section{Introduction}\label{INTR}
%%%%%%%%%%%%%%%%%%%%%%%%%%%%%%%%%%%%%%%%%%%%%%%%%%%%%%%%%%%%%%%%%%%%%%%%%%%%%%%%%%%%%%%%%%%%%%%%%%%%
\setcounter{section}{1}\setcounter{equation}{0}

The  Navier-Stokes system describing a motion of an
incompressible homogeneous fluid in the whole three dimensional space has the following form
\begin{equation}\label{eq.NS}
\partial_tu-\Delta u+(u\cdot\nabla)u+\nabla p=0,\quad \Div u=0,
\end{equation}
for $(x,t)\in\RR^3\times\RR^+$.
Here, the vector $u=\big(u_1(x,t),u_2(x,t),u_3(x,t)\big)$ denotes the unknown velocity field and the scalar function $p=p(x,t)$ stands for the unknown pressure. System \eqref{eq.NS} should be supplemented with an initial condition $u|_{t=0}=u_0$, however, it does not play any role in the statement of the main result in this work.

Our goal in this paper is to propose mathematical tools  which, in particular, allow us to prove the following theorem.
\begin{theorem}\label{THM.INT}
For every  H\"older continuous function $\gamma:\RR^+\to\RR^3$ with a H\"older exponent $\alpha>\frac34$, there exists a vector field $u=u(x,t)$ and a pressure $p(x,t)$ which  are smooth and solve system \eqref{eq.NS} for all $(x,t)\in(\RR^3\times\RR^+)\backslash\Gamma$, where $\Gamma=\{(\gamma(t),t)\in\RR^3\times\RR^+:\,t>0\}$ and which are singular on the curve $\Gamma$.
\end{theorem}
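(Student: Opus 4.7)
The plan is to use Slezkin--Landau stationary solutions of the Navier--Stokes system as local models of the singularity and then add a smooth correction. Recall that a Slezkin--Landau solution is a $(-1)$-homogeneous vector field $U(x)$, smooth on $\RR^3\setminus\{0\}$, that together with a pressure $P(x)$ solves $-\Delta U+(U\cdot\nabla)U+\nabla P=0$, $\Div U=0$, with $|U(x)|\sim |x|^{-1}$ and $|P(x)|\lesssim |x|^{-2}$ near the origin. Fixing such a profile of sufficiently small strength (so the perturbation problem is subcritical) and given $\gamma$ as in the statement, I would make the ansatz
\begin{equation*}
u(x,t)=U(x-\gamma(t))+v(x,t),\qquad p(x,t)=P(x-\gamma(t))+q(x,t),
\end{equation*}
with $(v,q)$ an unknown \emph{smooth} correction on all of $\RR^3\times\RR^+$.

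Substituting into \eqref{eq.NS} and using that $(U,P)$ solves the stationary system in the space variable, $(v,q)$ satisfies the perturbed Navier--Stokes system
\begin{equation*}
\partial_t v-\Delta v+\Div\bigl(U\otimes v+v\otimes U+v\otimes v\bigr)+\nabla q=F,\qquad \Div v=0,
\end{equation*}
with source $F(x,t)=-\partial_t\bigl[U(x-\gamma(t))\bigr]$. Formally $F=\dot\gamma(t)\cdot\nabla U(x-\gamma(t))$, but since $\gamma$ is only H\"older this fails pointwise and $F$ must be read as a distribution in $t$. I would then pass to the mild (Duhamel) formulation, so that the inhomogeneous contribution becomes
\begin{equation*}
v_{\mathrm{in}}(x,t)=\int_0^t e^{(t-s)\Delta}\mathbb{P}F(\cdot,s)\,\intd s,
\end{equation*}
with $\mathbb{P}$ the Leray projection. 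The decisive move is an integration by parts in $s$ that transfers the time derivative from $U(x-\gamma(s))$ onto the semigroup, replacing the ill-defined $\dot\gamma$ by H\"older increments $U(x-\gamma(s))-U(x-\gamma(t))$, whose size is controlled by $|\gamma(s)-\gamma(t)|\le C|s-t|^\alpha$ times the (singular) size of $\nabla U$ off the curve.

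The hardest step will be showing that, precisely when $\alpha>\tfrac34$, the resulting singular integral converges in a norm well suited to a fixed-point argument and produces a genuinely smooth $v$. Three scales must balance: the $|x|^{-1}$ spatial singularity of $U$, the parabolic decay $(t-s)^{-5/2}$ of $\Delta e^{(t-s)\Delta}$ arising after the integration by parts, and the H\"older exponent $\alpha$ in time. The threshold $\alpha>\tfrac34$ is tuned so that the corresponding space-time integrals are locally finite and yield the required regularity gain. Once $v_{\mathrm{in}}$ is controlled in an appropriate (possibly weighted) space, the linear terms coupling $v$ to $U$ and the quadratic term $v\otimes v$ are handled by a standard contraction argument, giving a unique small $v$. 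Finally, parabolic bootstrap away from $\Gamma$ upgrades $v$ to a smooth function, so that $u=U(x-\gamma(\cdot))+v$ is smooth on $(\RR^3\times\RR^+)\setminus\Gamma$ and inherits the $|x-\gamma(t)|^{-1}$ singularity on $\Gamma$ from the Slezkin--Landau profile; interpreted as a distribution on the whole space-time, $u$ solves \eqref{eq.NS} with a singular force supported on $\Gamma$, as announced in the abstract.
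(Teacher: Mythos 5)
Your proposal follows essentially the same route as the paper: there, too, the solution is decomposed as $u = V^c(\cdot-\gamma(t)) + \omega$ around a shifted Slezkin--Landau profile of small strength (achieved by taking $|c|$ large), and the source term $y_0$ derived for $\omega$ is exactly your integrated-by-parts Duhamel term, with the H\"older increment $V^c(x-\gamma(\tau))-V^c(x-\gamma(t))$ appearing as $\big(e^{i\gamma(\tau)\cdot\xi}-e^{i\gamma(t)\cdot\xi}\big)\widehat{V^c}(\xi)$ because the whole argument is run on the Fourier side in the pseudo-measure spaces $\mathcal{PM}^a$ rather than in weighted physical-space norms. Two caveats on your outline. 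First, you should not expect the correction $v$ to be smooth on all of $\RR^3\times\RR^+$: the forcing increments are themselves singular on $\Gamma$, and the paper only establishes the integrability gain $\omega(\cdot,t)\in L^q(\RR^3)$ for $q\in\big(3,\frac{3}{2-2\alpha}\big)$ --- enough to guarantee that the $|x-\gamma(t)|^{-1}$ singularity of the profile survives --- together with smoothness of $u$ away from $\Gamma$; your own final bootstrap only uses this weaker statement, so the overstatement is harmless but should be corrected. Second, the threshold $\alpha>\frac34$ does not come from convergence of the singular integral (the fixed point and the whole construction work for every $\alpha>\frac12$); it enters only in the regularity step, where $\omega\in\mathcal{PM}^{1+2\alpha}$ with $1+2\alpha>\frac52$ gives $\nabla u\in L^2_{\rm loc}$ off the curve and hence smoothness via Serrin's local regularity criterion.
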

This theorem does not answer a long standing question on the existence of singular solutions to Navier-Stokes system \eqref{eq.NS}. 
In fact,  our solution $\big(u(x,t),p(x,t)\big)$ satisfies system \eqref{eq.NS} in a pointwise sense for all $(x,t)\in(\RR^3\times\RR^+)\backslash\Gamma$, however, as a distributional solution on $\RR^3\times\RR^+$, the couple $(u,p)$ solves an analogous Navier-Stokes system with an external force concentrated on the curve $\Gamma$, see equation~\eqref{eq.SNS} below.

An example of such  a solution has been already  obtained in a physical  experiment, where an axially symmetric jet discharging from a  thin pipe into the  space was studied.   This phenomenon may be described
by the classical incompressible Navier-Stokes system  \eqref{eq.NS}
 and its one-parameter family explicit stationary solutions $\big(V^c(x),Q^c(x)\big)$ of the following form
\begin{eqnarray}
V_1^c(x)= 2{c|x|^2-2x_1|x|+cx_1^2\over |x|(c|x|-x_1)^2},  &&\quad
V_2^c(x)= 2{x_2(cx_1-|x|) \over |x|(c|x|-x_1)^2}, \label{sing-sol} \\
V_3^c(x)= 2{x_3(cx_1-|x|)  \over |x|(c|x|-x_1)^2},&&\quad
Q^c(x)= 4{cx_1-|x|  \over |x|(c|x|-x_1)^2}, \nonumber
\end{eqnarray}
where $|x|=\sqrt{x_1^2+x_2^2+x_3^2}$ and $c\in\RR$ is an arbitrary constant
such that $|c|>1$.
These explicit stationary solutions to \eqref{eq.NS} seem to be  discovered
 first
 by Slezkin \cite{Slezkin}  (see the translation of this work in \cite[Apendix]{Gal}) and described by Landau in \cite{L} (see also \cite[Sec.~23]{LL}).
They play a pivotal role in this work and we are going to call them as
{\it the Slezkin-Landau solutions} to system \eqref{eq.NS}.
Let us also recall that the stationary solutions \eqref{sing-sol} were also independently derived in \cite{Sq,TX}
and they can be found in standard textbooks,
see {\it e.g.}~\cite[p.~206]{B}.
To obtain such solutions, it suffices to notice
 that the additional axisymmetry requirement reduces the stationary Navier-Stokes system to a system of ODEs which can be solved explicitly in terms of elementary functions. Recently, \v{S}ver\'ak \cite{Sv} proved that even if we drop the requirement of axisymmetry,
 the Slezkin-Landau solutions \eqref{sing-sol} are still the only stationary solutions  which are invariant under the natural scaling of system \eqref{eq.NS}.

The Slezkin-Landau solutions appear in recent works in different contexts.
It is proved in \cite{CK04} that they are asymptotically stable in a suitable
Banach space of tempered distributions. They are also  asymptotically stable
under arbitrary large initial perturbations of  finite energy, see \cite{KP11,KPS13}.
They appear in asymptotic expansions of solutions to initial-boundary value problems for the Navier-Stokes system \eqref{eq.NS},
{\it cf.}~\cite{FGK11,KMT12,AV11,MT12}.

One can check by  straightforward calculations that the functions
$\big(V_1^c(x), V_2^c(x), V_3^c(x)\big)$ and $Q^c(x)$ given by \eqref{sing-sol} satisfy system \eqref{eq.NS} in the {\it pointwise sense} for every $x\in
\RR^3\setminus\{(0,0,0)\}$.
They  are homogeneous functions of degree $-1$ and $-2$, respectively, and are smooth for $x\neq0$.
Thus, the Slezkin-Landau  solution $\big(V^c,Q^c\big)$ solves system \eqref{eq.NS} in a classical and pointwise sense on $(\RR^3\times\RR^+)\backslash\Gamma_0$ and is singular on the line $\Gamma_0=\{(0,t)\in\RR^3\times\RR^+,\,t\geq0\}$.
On the other hand, if one treats $\big(V^c,Q^c\big)$ as a {\it distributional or generalized} solution to
\eqref{eq.NS} in the whole space $\RR^3$, it corresponds to the singular external force
$(\kappa\delta_0, 0,0),$ where the parameter $\kappa\neq 0$ depends on $c$ and $\delta_0$ stands for the Dirac measure (details of this reasoning are recalled  below in Proposition \ref{prop:sing-sol}).

In this work, we generalize this idea and we construct analogous singular solutions on a sufficiently regular curve $\Gamma\subset\RR^3\times\RR^+$.
Our solutions are not explicit and they behave asymptotically as the Slezkin-Landau solution in a neighborhood of a singularity at the curve $\Gamma$.
Theorem \ref{THM.INT} is a particular case of Theorem \ref{THM.SNS} formulated in the next section.

Our results  have been motivated by recent works of Yanagida and his collaborators \cite{SY09,SY12,SY12-2,SY11,SY10,TY13}
where solutions singular on curves have been constructed 
for either nonlinear or linear heat equation.

\subsection*{Notation}
Here, $\RR^+=(0,\infty)$.
For $p \in [1,\infty]$,
 the usual  Lebesgue space is denoted by $L^p (\RR^3)$
and  the weak  Marcinkiewicz $L^p$-space --
by $L^{p,\infty}(\RR^3)$.
In the case of  Banach spaces $X$ used in this work, the norm in $X$
is denoted by $\|\cdot\|_X$.
Given an open set $\Omega$, the symbol
 $C^\infty_{\rm c} (\Omega)$ denotes the  set of all smooth functions which are compactly supported in $\Omega$.
   $\mathcal{S}(\RR^3)$ is the Schwartz class of smooth and rapidly decreasing functions.
   Constants
 may change from line to line and  will be denoted by $C$.

%%%%%%%%%%%%%%%%%%%%%%%%%%%%%%%%%%%%%%%%%%%%%%%%%%%%%%%%%%%%%%%%%%%%%%%%%%%%%%%%%%%%%%%%%%%%%%%%%%%
\section{Main result}\label{MAIN}
%%%%%%%%%%%%%%%%%%%%%%%%%%%%%%%%%%%%%%%%%%%%%%%%%%%%%%%%%%%%%%%%%%%%%%%%%%%%%%%%%%%%%%%%%%%%%%%%%%%%
\setcounter{section}{2}\setcounter{equation}{0}

The result formulated in Theorem \ref{THM.INT} is covered by the following more general result.
\begin{theorem}\label{THM.SNS}
Assume that $\gamma:[0,\infty)\rightarrow\RR^3$ is H\"older continuous with an exponent $\alpha\in \big(\frac12,1\big]$ and denote by $\Gamma$ the curve  $\{(\gamma(t),t)\in\RR^3\times\RR^+:\,t>0\}$.
There exists a vector field $u(x,t)=(u_1,u_2,u_3)\in L^\infty\big([0,\infty);L^{3,\infty}(\RR^3)\big)$ and a pressure $p\in L^\infty\big([0,\infty);L^{\frac32,\infty}(\RR^3)\big)$ such that
\begin{enumerate}[\rm (i)]
\item \label{eq.thm-item-0} $u(x,t)$ and $p(x,t)$ satisfy system \eqref{eq.NS} for all $(x,t)\in(\RR^3\times\RR^+)\backslash\Gamma$ in the sense of distributions,
  \item\label{eq.thm-item-1}\begin{itemize}
                              \item $u\in L^\infty_{\rm loc}\big((\RR^3\times\RR^+)\backslash\Gamma\big)$ if $\alpha\in(\frac12,\frac34]$,
                              \item $u,\,p\in C^\infty\big((\RR^3\times\RR^+)\backslash\Gamma\big)$ if $\alpha>\frac34$.                         \end{itemize}
   \item\label{eq.thm-item-2}
for every $t>0$
\begin{itemize}
\item $u(\cdot,t)-V^c(\cdot-\gamma(t))\in L^q(\RR^3)$ for each $q\in\big(3,\frac{3}{2-2\alpha}\big)$,
\item $p(\cdot,t)-Q^c(\cdot-\gamma(t))\in L^q(\RR^3)$ for each $q\in\big(\frac32,\frac{3}{3-2\alpha}\big)$,
\end{itemize}
where $(V^c, Q^c)$ denotes the Slezkin-Landau solution given by formula \eqref{sing-sol} with fixed and sufficiently large $|c|>1$.
\end{enumerate}
\end{theorem}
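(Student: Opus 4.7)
The plan is to seek the solution in the perturbation form
\begin{equation*}
u(x,t) = V^c(x-\gamma(t)) + w(x,t), \qquad p(x,t) = Q^c(x-\gamma(t)) + \pi(x,t),
\end{equation*}
where $V(x,t):=V^c(x-\gamma(t))$ carries the prescribed singularity on $\Gamma$ and $(w,\pi)$ is to be obtained as a subcritical correction. Substituting into \eqref{eq.NS} and exploiting the fact that $(V^c,Q^c)$ is a stationary solution of \eqref{eq.NS} on $\RR^3\setminus\{0\}$, we find that $(w,\pi)$ must satisfy, pointwise away from $\Gamma$,
\begin{equation*}
\partial_t w - \Delta w + (V\cdot\nabla)w + (w\cdot\nabla)V + (w\cdot\nabla)w + \nabla\pi = -\partial_tV, \qquad \Div w = 0.
\end{equation*}
Because $\gamma$ is only H\"older continuous, the forcing $-\partial_tV=\gamma'(t)\cdot\nabla V$ exists merely as a distribution, so the standard Duhamel formula for $w$ has to be reinterpreted.

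The central device is integration by parts in time, which rewrites the ill-defined convolution of the heat semigroup with $\partial_sV$ as
\begin{equation*}
-\int_0^t e^{(t-s)\Delta}\partial_sV(s)\,ds \;=\; e^{t\Delta}\bigl[V(0)-V(t)\bigr] \;-\; \int_0^t \Delta e^{(t-s)\Delta}\bigl[V(s)-V(t)\bigr]\,ds,
\end{equation*}
whose right-hand side depends on $\gamma$ only through the differences $V(s)-V(t)=V^c(\cdot-\gamma(s))-V^c(\cdot-\gamma(t))$. Combining the pointwise bound $|V^c(x-h)-V^c(x)|\le C|h|/|x|^2$ (valid for $|x|\ge 2|h|$) with $|\gamma(s)-\gamma(t)|\le C|s-t|^\alpha$, and splitting according to the relative size of $|x-\gamma(t)|$ and $|\gamma(s)-\gamma(t)|$, one gets
\begin{equation*}
\|V(s)-V(t)\|_{L^{3/2,\infty}(\RR^3)} \le C|s-t|^\alpha,
\end{equation*}
and a standard Marcinkiewicz heat-kernel bound upgrades this to $\|\Delta e^{(t-s)\Delta}[V(s)-V(t)]\|_{L^q(\RR^3)} \le C(t-s)^{-2+3/(2q)}|s-t|^\alpha$. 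Integrability in $s\in(0,t)$ holds if and only if $\alpha+3/(2q)>1$, i.e.\ $q<3/(2-2\alpha)$; coupled with the subcriticality requirement $q>3$, this forces $\alpha>\tfrac12$ and singles out exactly the range of $q$ announced in item~(iii).

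With the source term under control, the fixed-point equation
\begin{equation*}
w(t) = e^{t\Delta}\bigl[V(0)-V(t)\bigr] - \int_0^t\Delta e^{(t-s)\Delta}\bigl[V(s)-V(t)\bigr]\,ds - \int_0^t e^{(t-s)\Delta}\mathbb{P}\,\Div\bigl(V\otimes w+w\otimes V+w\otimes w\bigr)\,ds
\end{equation*}
is solved by Banach contraction in a space of the form $L^\infty([0,T];L^{3,\infty}(\RR^3))\cap L^\infty([0,T];L^q(\RR^3))$ for $q$ in the stated range and arbitrary $T>0$. The nonlinear term is estimated by Kato-type bilinear bounds for the Stokes semigroup; smallness for the contraction is secured by choosing $|c|$ sufficiently large, so that $\|V\|_{L^\infty L^{3,\infty}}$ is correspondingly small, in the spirit of \cite{CK04,KP11,KPS13}. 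Items (i) and (iii) of the theorem then follow at once, whereas the local $L^\infty$ bound and the $C^\infty$ regularity away from $\Gamma$ in item~(ii) are obtained by a standard parabolic bootstrap of Serrin type, with the sharper threshold $\alpha>\tfrac34$ reflecting the fact that only then can $q$ be pushed beyond $6$, which is what the bootstrap needs to upgrade integrability to full H\"older regularity in both variables outside $\Gamma$. The principal difficulty, and the step I expect to be the main technical hurdle, is the fine execution of the integration-by-parts argument in the appropriate Lorentz-space scale, so that the scaling of the source term produced from the distributional derivative $\partial_tV$ is precisely the one absorbed by the bilinear Stokes estimates closing the contraction.
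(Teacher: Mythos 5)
Your overall architecture coincides with ours: perturb off the shifted Slezkin--Landau field, absorb the distributional forcing $-\partial_tV$ by an integration by parts in time that leaves only the differences $V^c(\cdot-\gamma(s))-V^c(\cdot-\gamma(t))$ (your displayed identity is exactly the physical-space form of our source term $y_0$ in \eqref{eq.y0}), close a fixed point using the smallness of $V^c$ for large $|c|$, and read off item \eqref{eq.thm-item-2} from the integrability of the correction; your derivation of the range $q<\frac{3}{2-2\alpha}$ and of the pressure range via Riesz transforms is correct and matches ours. The genuine difference is the functional setting: you work in Lorentz spaces $L^{3,\infty}\cap L^q$, while we work on the Fourier side in the pseudomeasure spaces $\mathcal{PM}^a$ of \eqref{eq.PM-a}, where the H\"older difference estimate reduces to the elementary bound $|e^{i\gamma(s)\cdot\xi}-e^{i\gamma(t)\cdot\xi}|\le C|\xi|\,|t-s|^{\alpha}$ and the bilinear estimate (Lemma \ref{lem.bil}, taken from \cite{CK04}) is a one-line convolution inequality. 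What the $\mathcal{PM}^a$ route buys is precisely the two points your sketch leaves open.

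First, the contraction space you name, $L^\infty([0,T];L^{3,\infty})\cap L^\infty([0,T];L^{q})$, does not close under the naive bilinear estimate: with $V\in L^{3,\infty}$ and $w\in L^{q}$, H\"older and heat-kernel smoothing give $\|e^{(t-s)\Delta}\mathbb{P}\Div(V\otimes w)\|_{L^q}\lesssim (t-s)^{-1}\|V\|_{L^{3,\infty}}\|w\|_{L^q}$, and the time integral diverges logarithmically. One must either replace $L^q$ by $L^{q,\infty}$ and run a Meyer--Yamazaki duality argument in the time integral, or introduce time-weighted Kato norms $\sup_t t^{\frac12-\frac{3}{2q}}\|\cdot\|_{L^q}$ --- the Lebesgue analogue of our $\I \cdot\I_{a,T}$. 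You correctly flag this as the main hurdle, but it is not resolved, and in the space as stated the contraction fails. Second, item \eqref{eq.thm-item-1} is not a routine bootstrap. Your solution carries no a priori local energy bound, and Serrin's criterion \cite{SER} presupposes $\nabla u\in L^2_{\rm loc}$; the threshold $\alpha>\frac34$ enters for us because then $1+2\alpha>\frac52$ and Lemma \ref{lem.imbed} yields $\nabla\omega\in L^2_{\rm loc}$ (in your $L^q$ scale this is indeed the borderline $q=6$, but the operative mechanism is the local energy bound, not extra H\"older regularity of the iterates). For $\alpha\in(\frac12,\frac34]$ your sketch says nothing, whereas the assertion $u\in L^\infty_{\rm loc}$ rests on the Luo--Tsai regularity criterion \cite{LT2013} for distributional solutions with small $L^{3,\infty}$ norm; without some such input this half of \eqref{eq.thm-item-1} remains unproved.
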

\begin{remark}
We say that $(u,p)$ solves system \eqref{eq.NS} in the sense of distributions on $\{(\RR^3\times\RR^+)\backslash\Gamma\}$ if the usual
 distributional integral formulation of system \eqref{eq.NS}
 is satisfied for all test functions $\varphi\in C^\infty_{\rm c}\big((\RR^3\times\RR^+)\backslash\Gamma\big)$.
\end{remark}

\begin{remark}\label{rem:sing}
Property \eqref{eq.thm-item-2} of Theorem \ref{THM.SNS} means that $u(x,t)$ and $p(x,t)$ have to be singular at the point $x=\gamma(t)$ for every $t>0$ and this singularity is comparable with the singularity of the functions $V^c(x-\gamma(t))$ and of $Q^c(x-\gamma(t))$, respectively. This is due to the fact that $V^c=V^c(x)$ is a homogeneous function of degree $-1$,
so, it does not belong to $L^q_{\rm loc}(\RR^3)$ for each $q\geq3$. Similarly, $Q^c=Q^c(x)$ is homogeneous of degree $-2$ and it does not belong to $L^q_{\rm loc}(\RR^3)$ for each $q\geq\frac32$.
\end{remark}

\begin{remark}\label{REM.Landan}
As we have emphasized in Remark \ref{rem:sing},
the stationary solution $V^c=V^c(x)$ defined in
\eqref{sing-sol} is singular with singularity of the kind
$\mathcal{O}(1/|x|)$ as $|x|\to 0$. This is the critical singularity,
because as it was shown
in \cite{CK,KK06}, every pointwise stationary solution
to system \eqref{eq.NS} in
$B_R\setminus\{0\}=\{x\in\RR^3\,:\, 0<|x|<R\}$ satisfying
$u(x)=o(1/|x|)$ as $|x|\to 0$ is also a solution in the sense of
distributions in the whole $B_R$.
 In other words, such a singularity at the origin is removable.
Analogous results on
removable singularities of time-dependent weak solutions to
the Navier-Stokes equations has been proved by Kozono \cite{Kozone98}.
Recent results on removable (time-dependent) singularities to semilinear parabolic equations can be found in
\cite{H13,Hsu2010,TY13}.
\end{remark}

The velocity vector field and the pressure $(u,p)$ in Theorem \ref{THM.SNS} are obtained as solutions to the following initial value problem for the incompressible Navier-Stokes system with a singular force
\begin{equation}\label{eq.SNS}
\begin{array}{ll}
\partial_tu+(u\cdot\nabla)u-\Delta u+\nabla p=\kappa\delta_{\gamma(t)}\bar{e}_1,\quad(x,t)\in\RR^3\times\RR^+,\\
\Div u=0,\\
u|_{t=0}=0,
\end{array}
\end{equation}
where $\kappa\in\RR$, $\bar{e}_1=(1,0,0)$ and $\delta_{\gamma(t)}$ is the Dirac measure on $\RR^3$ concentrated at the point $x=\gamma(t)$. Solutions to  the singular problem \eqref{eq.SNS} with sufficiently small $|\kappa|$ has been constructed in \cite{CK04} in a suitable space of tempered distributions.
In this work, using the approach introduced in \cite{CK04}, we show that those solutions are, in fact, functions with all the 
properties stated in Theorem \ref{THM.SNS}.

\begin{remark}
For simplicity of the exposition,
we supplement problem \eqref{eq.SNS} with the zero initial datum, however, a completely analogous result may be proved in the case of a sufficiently
small initial datum $u|_{t=0}\in\mathcal{PM}^2$ (see the next section).
Such a nontrivial initial condition will just give us another solution of problem
 \eqref{eq.SNS} with properties stated in Theorem \ref{THM.SNS}.
\end{remark}

\begin{remark}
Applying results from the recent work \cite{KPS13}, we obtain immediately
that solutions to problem~\eqref{eq.SNS} are asymptotically stable under arbitrary large initial perturbations from $L^2(\RR^3)$.
\end{remark}
In the next section, we recall mathematical tools which allows us to study problem \eqref{eq.SNS}. In Section \ref{HEAT}, we explain our idea of proving Theorem \ref{THM.SNS} by using it in the case of the heat equation $u_t=\Delta u$. Theorem \ref{THM.SNS} is proved in Section \ref{PROF}.

%%%%%%%%%%%%%%%%%%%%%%%%%%%%%%%%%%%%%%%%%%%%%%%%%%%%%%%%%%%%%%%%%%%%%%%%%%%%%%%%%%%%%%%%%%%%%%%%%%%
\section{Estimates in spaces $\mathcal{PM}^a$}\label{PRE}
%%%%%%%%%%%%%%%%%%%%%%%%%%%%%%%%%%%%%%%%%%%%%%%%%%%%%%%%%%%%%%%%%%%%%%%%%%%%%%%%%%%%%%%%%%%%%%%%%%%%
\setcounter{section}{3}\setcounter{equation}{0}
\subsection{Preliminary properties}
First, we precise our assumption on the curve $\gamma$.
\begin{definition}\label{def.Holder}
We say that $\gamma:\,[0,\infty)\to\RR^3$ is H\"older continuous with an exponent $\alpha\in(0,1]$, if for every $T>0$ there is a constant $C=C(T)>0$ such that for all $t,\,s\in[0,T]$, we have $\big\|\gamma(t)-\gamma(s)\big\|_{\RR^3}\leq C(T)|t-s|^\alpha$.
\end{definition}
Following \cite{CK04}, we introduce a family Banach spaces in which we solve singular problem \eqref{eq.SNS}. For every fixed $a\geq0$, we set
\begin{equation}\label{eq.PM-a}
\mathcal{PM}^a\equiv\bigl\{v\in{\mathcal{S}}'(\RR^3):\,\, \widehat v\in L^1_{\rm loc}(\RR^3),\,
\|v\|_{\mathcal{PM}^a}\equiv \text{ess}\sup_{\xi\in\RR^3} |\xi|^a|\widehat
v(\xi)|<\infty\bigr\}.
\end{equation}
In this paper, we mainly deal with vector fields $u=(u_1,u_2,u_3)$ hence, by the very definition
$\|u\|_{\mathcal{PM}^a}=\max\bigl\{\|u_1\|_{\mathcal{PM}^a},\|u_2\|_{\mathcal{PM}^a},\|u_3\|_{\mathcal{PM}^a}\bigr\}.$
Below, we construct a solution to problem \eqref{eq.SNS} satisfying $u\in C_{\rm w}\big([0,\infty),\mathcal{PM}^2\big)$ which means that $u\in L^\infty\big([0,\infty);\mathcal{PM}^2\big)$ and the function $\langle u(t),\varphi\rangle$ is continuous with respect to $t\geq0$ for every test function $\varphi\in\mathcal{S}(\RR^3)$. Here, $\langle\cdot,\cdot\rangle$ denotes the usual pairing between $\mathcal{S}'(\RR^3)$ and $\mathcal{S}(\RR^3)$.

To study regularity properties of $u$, also following \cite{CK04}, we introduce the Banach space
\begin{eqnarray}
\mathcal{Y}_T^a&\equiv&  C _{\rm w}([0, T), \mathcal{PM}^2)\label{Ya}\\
&&\cap\;\; \big\{v:(0,T)\to
\mathcal{PM}^a:\,\,
\I v\I_{a,T}\equiv\sup_{0<t\leq T}t^{a/2-1}\|v(t)\|_{\mathcal{PM}^a}<\infty\big\},\nonumber
\end{eqnarray}
for each $a\geq2$ and $T\in(0,\infty]$.
The space $\mathcal{Y}_T^a$ is normed by the quantity
$\|v\|_{\mathcal{Y}_T^a}=\I v\I_{2,T}+\I v\I_{a,T}$ and of course, $\mathcal{Y}^{2}_{\infty}=C_{\rm w}\big([0,\infty),\mathcal{PM}^2\big)$ with this definition.

This is a usual procedure to eliminate the pressure $p=p(x,t)$ from problem \eqref{eq.SNS} by applying the Leray projector $\mathbb{P}$ on solenoidal vector fields, which is given formally by the formula
$\mathbb{P}={\rm I}-\nabla(\Delta)^{-1}\Div.$
~It is well-known that this is a pseudodifferential operator corresponding to the matrix
$\widehat{\mathbb{P}}(\xi)$  with the components
\begin{equation}\label{eq.Leray-pro}
\big(\widehat{\mathbb{P}}(\xi)\big)_{j,k} =\delta_{jk} -{\xi_j\xi_k\over |\xi|^2},
\end{equation}
where $\delta_{jk}=0$ for $j\neq k$ and $\delta_{jk}=1$ for $j=k$.
In particular, $|\widehat{\mathbb{P}}(\xi)|\leq2$ for all $\xi\neq0$. Using the Leray projector $\mathbb{P}$, at least formally, we may rewrite problem \eqref{eq.SNS} as the following integral equation
\begin{equation}\label{eq.Integral}
u(t)=B(u,u)(t)+\kappa\int_0^tS(t-\tau)\mathbb{P}(\delta_0\bar{e}_1)\,\mathrm{d}\tau,
\end{equation}
where $S(t)$ is the heat semigroup given as the convolution with the
Gauss--Weierstrass kernel  $G(x,t)=(4\pi t)^{-3/2} \exp(-|x|^2/(4t))$ and the bilinear form
\begin{equation}\label{Bf}
B(u,v)(t)=-\int_0^t S(t-\tau)\mathbb{P} \nabla \cdot (u\otimes
v)(\tau)\;\mathrm{d}\tau,
\end{equation}
with $u\otimes v=(u_iv_j)_{3\times3}$. To give a precise meaning of equations \eqref{eq.Integral}-\eqref{Bf}, we reformulate them using the Fourier transform in the following way.

\begin{definition}\label{def1}
By a solution of either  problem \eqref{eq.SNS} or equations \eqref{eq.Integral}-\eqref{Bf} we mean a vector field
$u= (u_1, u_2,u_3)\in C_{\rm w}\big([0,\infty);\mathcal{PM}^2\big)$ such that
\begin{equation}\label{FDuh}
\widehat u(\xi,t) =\int_0^te^{-(t-\tau)|\xi|^2}
\widehat{\mathbb{P}} (\xi)\; i\xi\cdot \big(\widehat{u\otimes
u}\big)(\xi,\tau)\, \mathrm{d}\tau
+\kappa\int_0^te^{-(t-\tau)|\xi|^2}
\widehat{\mathbb{P} }(\xi) e^{i\gamma(\tau)\cdot\xi}\bar{e}_1\, \mathrm{d}\tau
\end{equation}
    for all $t\geq0$ and almost all $\xi\in\RR^3$, where for two tempered distributions  $u,\,v\in\big(\mathcal{PM}^2\big)^3$, we denote $\widehat{u\otimes
v}=(\widehat{u_i}\ast\widehat{v_j})_{3\times3}$.
\end{definition}

We construct a solution to integral equation \eqref{FDuh}  using the Banach fixed point theorem which, in the case of the incompressible Navier-Stokes system, is often reformulated in the following way.
\begin{lemma}\label{lem:xyB}
Let $(X,\|\cdot\|)$ be an abstract Banach space, $L:\,X\to X$ be a linear bounded operator such that for a constant $\lambda\in[0,1)$, we have
$\|L(x)\|\leq \lambda\|x\|$ for all $x\in X,$
and $B :X \times X \to X$ be a bilinear mapping such that
$$\|B(x_1,x_2)\|\le \eta\|x_1\|\,\|x_2\|\quad\text{for every}\quad x_1,\, x_2\in X $$
for some constant $\eta>0$. Then, for every $y\in X$ satisfying $4\eta\|y\|< (1-\lambda)^{2}$, the equation
\begin{equation}\label{eq.fixed-theorem}
x = y +L(x)+B(x,x)
\end{equation}
 has a solution $x\in X$. In particular, this solution satisfies $\|x\|\le \frac{2\|y\|}{1-\lambda}$,
and it is the only one among all solutions satisfying $\|x\|< \frac{1-\lambda}{2\eta}$.
\end{lemma}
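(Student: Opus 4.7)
The plan is to apply the Banach contraction mapping principle to the map $F: X \to X$ defined by $F(x) = y + L(x) + B(x,x)$ on a suitable closed ball. For $R > 0$ and $\|x\| \le R$, the triangle inequality combined with the hypotheses on $L$ and $B$ gives $\|F(x)\| \le \|y\| + \lambda R + \eta R^2$. To obtain a self-map of the ball of radius $R$, one needs $\eta R^2 - (1-\lambda) R + \|y\| \le 0$. Under the assumption $4\eta\|y\| < (1-\lambda)^2$, this quadratic in $R$ has two distinct positive roots
$$R_\pm = \frac{(1-\lambda) \pm \sqrt{(1-\lambda)^2 - 4\eta\|y\|}}{2\eta},$$
and the inequality holds for every $R \in [R_-, R_+]$. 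Rationalising the numerator yields $R_- = \frac{2\|y\|}{(1-\lambda) + \sqrt{(1-\lambda)^2 - 4\eta\|y\|}} \le \frac{2\|y\|}{1-\lambda}$, which matches the quantitative bound claimed in the statement.

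Next I would verify the contraction property on the closed ball of radius $R_-$. Using the bilinearity identity $B(x_1,x_1) - B(x_2,x_2) = B(x_1 - x_2, x_1) + B(x_2, x_1 - x_2)$, one computes
$$\|F(x_1) - F(x_2)\| \le \lambda \|x_1 - x_2\| + \eta\bigl(\|x_1\| + \|x_2\|\bigr)\|x_1 - x_2\| \le (\lambda + 2\eta R)\|x_1 - x_2\|.$$
Since $R_-$ is the strictly smaller of the two roots, $R_- < (1-\lambda)/(2\eta)$, so $\lambda + 2\eta R_- < 1$ and $F$ is a strict contraction on that ball. The Banach fixed-point theorem then produces a solution $x$ of \eqref{eq.fixed-theorem} with $\|x\| \le R_- \le 2\|y\|/(1-\lambda)$.

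For uniqueness inside the larger ball $\{\|x\| < (1-\lambda)/(2\eta)\}$, suppose $x_1, x_2$ both satisfy \eqref{eq.fixed-theorem} with $\|x_i\| < (1-\lambda)/(2\eta)$. Subtracting the two equations and applying the same bilinearity identity yields $\|x_1 - x_2\| \le \bigl(\lambda + \eta(\|x_1\| + \|x_2\|)\bigr)\|x_1 - x_2\|$, and the hypothesis on the norms makes the prefactor strictly less than $1$, forcing $x_1 = x_2$. Since this is an entirely standard contraction-mapping argument, there is no genuine obstacle; the only mildly delicate point is the bookkeeping that relates $\|y\|$, $\eta$, and $\lambda$ so as to simultaneously produce the a priori bound $\|x\| \le 2\|y\|/(1-\lambda)$ and the uniqueness radius $(1-\lambda)/(2\eta)$, which is precisely why the explicit formula for $R_\pm$ is used.
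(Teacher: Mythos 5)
Your proposal is correct and follows exactly the route the paper indicates (the paper omits the proof, remarking only that it consists in applying the Banach contraction principle on a ball of radius less than $\frac{1-\lambda}{2\eta}$); your choice of the smaller root $R_-$ of $\eta R^2-(1-\lambda)R+\|y\|=0$ as the radius, together with the rationalisation giving $R_-\le \frac{2\|y\|}{1-\lambda}$, supplies precisely the bookkeeping the paper leaves to the reader. No gaps.
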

We skip the proof of this lemma because it is elementary, and  it simply 
consists in applying the Banach contraction principle to equation \eqref{eq.fixed-theorem} in the ball $\{x\in X:\,\|x\|\leq\varepsilon\}$ with arbitrary $\varepsilon<\frac{1-\lambda}{2\eta}$.
\subsection{Auxiliary estimates}
Due to Lemma \ref{lem:xyB}, to show the existence of solutions to equation \eqref{FDuh}, we need estimates of the bilinear form $B(\cdot,\cdot)$ defined in \eqref{Bf}.
\begin{lemma}\label{lem.bil}
Let $2\leq a<3$ and $T\in(0,\infty]$. There exists a constant $\eta_a>0$, independent of $T$, such that
for every $u\in C _{\rm w} \big([0,T), \mathcal{PM}^2\big)$ and
$v\in \bigl\{v(t)\in\mathcal{PM}^a:\; \I v\I_{a,T}<\infty\bigr\}$ we have
\begin{equation}\label{eq.Ya-Est}
\I B(u,v)\I_{a,T}\leq \eta_a \I u\I_{2,T} \I v\I_{a,T},
\end{equation}
where the norm $\I\cdot\I_{a,T}$ is defined in \eqref{Ya}.
\end{lemma}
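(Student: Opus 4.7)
\medskip

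\noindent\textbf{Proof proposal.} The plan is to work entirely on the Fourier side, where the bilinear form reads
\begin{equation*}
\widehat{B(u,v)}(\xi,t) \;=\; -\int_0^t e^{-(t-\tau)|\xi|^2}\,\widehat{\mathbb{P}}(\xi)\cdot i\xi\cdot \widehat{u\otimes v}(\xi,\tau)\,\mathrm{d}\tau.
\end{equation*}
Since $|\widehat{\mathbb{P}}(\xi)|\leq 2$ by \eqref{eq.Leray-pro}, the symbol simply contributes a bounded factor, and one pointwise estimate gives
\begin{equation*}
|\widehat{B(u,v)}(\xi,t)| \;\leq\; 2|\xi|\int_0^t e^{-(t-\tau)|\xi|^2}\,\bigl|\widehat{u}\ast\widehat{v}\bigr|(\xi,\tau)\,\mathrm{d}\tau.
\end{equation*}
So the task splits into a spatial estimate of the convolution $\widehat{u}\ast\widehat{v}$ and a temporal estimate of the heat-kernel integral.

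For the spatial step, I would use the definitions to write $|\widehat{u}(\eta,\tau)|\leq |\eta|^{-2}\I u\I_{2,T}$ (the weight is $\tau^{0}$ since $a=2$) and $|\widehat{v}(\xi-\eta,\tau)|\leq |\xi-\eta|^{-a}\tau^{1-a/2}\I v\I_{a,T}$. Inserting this into the convolution and invoking the classical Riesz composition identity in $\RR^3$,
\begin{equation*}
\int_{\RR^3}\frac{\mathrm{d}\eta}{|\xi-\eta|^{a}\,|\eta|^{2}} \;=\; C_a\,|\xi|^{1-a},
\end{equation*}
which is valid precisely because $2<3$, $a<3$, and $a+2>3$ (this is where the hypothesis $2\leq a<3$ enters), yields
\begin{equation*}
\bigl|\widehat{u}\ast\widehat{v}\bigr|(\xi,\tau)\;\leq\; C_a\,|\xi|^{1-a}\,\tau^{1-a/2}\,\I u\I_{2,T}\,\I v\I_{a,T}.
\end{equation*}

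It remains to bound the time integral, which after inserting the previous estimate reduces to showing
\begin{equation*}
|\xi|^{a}\int_0^t e^{-(t-\tau)|\xi|^{2}}|\xi|^{2-a}\tau^{1-a/2}\,\mathrm{d}\tau \;\leq\; C\,t^{1-a/2}.
\end{equation*}
I would handle this by the substitution $\tau=ts$, reducing the question to proving that $y\int_0^1 e^{-y(1-s)}s^{1-a/2}\,\mathrm{d}s$ is bounded uniformly in $y=t|\xi|^{2}>0$. Splitting into $y\leq 1$ (where $s^{1-a/2}$ is integrable since $1-a/2>-1$, using $a<4$) and $y\geq 1$ (where one further splits $[0,1]=[0,\tfrac12]\cup[\tfrac12,1]$, bounding $e^{-y(1-s)}$ either by $e^{-y/2}$ or exploiting the smallness of the $s$-interval via $y\int_0^{1/2}e^{-y(1-s)}\,\mathrm{d}s\leq 1$) gives a uniform constant. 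Multiplying through by $t^{a/2-1}$ produces the desired $\I B(u,v)\I_{a,T}\leq \eta_a\I u\I_{2,T}\I v\I_{a,T}$, with constant independent of $T$.

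The main obstacle I anticipate is really just tracking the two endpoint conditions simultaneously: the spatial Riesz-composition step fails if $a\geq 3$ (the integral diverges at infinity), and the time-integral step degenerates as $a\to 4$; both must be checked inside the stated range $2\leq a<3$. The rest is bookkeeping on Fourier multipliers and a routine Beta-type time integral.
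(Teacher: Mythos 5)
Your argument is correct and is essentially the proof of \cite[Proposition 7.1]{CK04} to which the paper defers: a pointwise Fourier-side bound using $|\widehat{\mathbb{P}}(\xi)|\leq 2$, the Riesz composition identity $\int_{\RR^3}|\xi-\eta|^{-a}|\eta|^{-2}\,\mathrm{d}\eta=C_a|\xi|^{1-a}$ (valid for $1<a<3$), and a uniform-in-$y=t|\xi|^2$ bound on the Beta-type time integral, which together give exactly $t^{a/2-1}\|B(u,v)(t)\|_{\mathcal{PM}^a}\leq \eta_a\I u\I_{2,T}\I v\I_{a,T}$ with $\eta_a$ independent of $T$. The exponent bookkeeping and the identification of where $a<3$ and $a<4$ enter are all accurate.
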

We skip the proof of Lemma \ref{lem.bil}, because it was proved in \cite[Proposition 7.1]{CK04}. Notice that in \cite{CK04}, inequality \eqref{eq.Ya-Est} was shown for $T=\infty$, however, after a minor modification that proof works for each finite $T$ as well.

\begin{lemma}\label{lem.product}
Let $a,\,b\in (0,3)$ and $a+b<3$.
There exists a constant $C>0$ such for all  $u\in\mathcal{PM}^a$ and $v\in\mathcal{PM}^b$
\begin{equation*}
\|uv\|_{\mathcal{PM}^{3-(a+b)}}\leq C\|u\|_{\mathcal{PM}^a}\|v\|_{\mathcal{PM}^b}.
\end{equation*}
\begin{proof}
This inequality is an immediate consequence of the following estimate
\begin{equation*}
\begin{split}
\widehat{(uv)}(\xi)=&C\int_{\RR^3}\widehat{u}(\xi-\eta)\widehat{v}(\eta)\,\mathrm{d}\eta\\
\leq&C\|u\|_{\mathcal{PM}^a}\|v\|_{\mathcal{PM}^b}\int_{\RR^3}\frac{1}{|\xi-\eta|^a}\frac{1}{|\eta|^b}\,\mathrm{d}\eta= \frac{C}{|\xi|^{3-(a+b)}}\|u\|_{\mathcal{PM}^a}\|v\|_{\mathcal{PM}^b}
\end{split}
\end{equation*}
for all $\xi\in \RR^3\setminus \{0\}$.
\end{proof}
\end{lemma}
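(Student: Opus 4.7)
The plan is to transfer the estimate to the Fourier side and reduce the inequality to a classical Riesz-type convolution bound. Since the product in physical space corresponds to convolution in frequency space, one writes (up to a universal constant)
\begin{equation*}
\widehat{uv}(\xi) \;=\; C\int_{\RR^3}\widehat{u}(\xi-\eta)\,\widehat{v}(\eta)\,\mathrm{d}\eta,
\end{equation*}
and then bounds each factor pointwise using the very definition of the $\mathcal{PM}$ spaces: $|\widehat{u}(\xi-\eta)|\le\|u\|_{\mathcal{PM}^a}|\xi-\eta|^{-a}$ and $|\widehat{v}(\eta)|\le\|v\|_{\mathcal{PM}^b}|\eta|^{-b}$. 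This gives
\begin{equation*}
|\widehat{uv}(\xi)|\;\le\; C\,\|u\|_{\mathcal{PM}^a}\|v\|_{\mathcal{PM}^b}\int_{\RR^3}\frac{\mathrm{d}\eta}{|\xi-\eta|^a\,|\eta|^b}.
\end{equation*}

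The core computation is then the evaluation of the pure Riesz composition integral on the right. I would do this by the standard scaling argument: substituting $\eta=|\xi|\zeta$ and using $|\xi-|\xi|\zeta|=|\xi|\,|\hat\xi-\zeta|$ with $\hat\xi=\xi/|\xi|$, one obtains
\begin{equation*}
\int_{\RR^3}\frac{\mathrm{d}\eta}{|\xi-\eta|^a|\eta|^b}\;=\;|\xi|^{3-(a+b)}\int_{\RR^3}\frac{\mathrm{d}\zeta}{|\hat\xi-\zeta|^a|\zeta|^b}.
\end{equation*}
By rotational invariance the remaining integral is independent of the direction $\hat\xi$, so it equals a constant $I_0(a,b)$. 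Combining this with the previous inequality yields $|\widehat{uv}(\xi)|\le C\,\|u\|_{\mathcal{PM}^a}\|v\|_{\mathcal{PM}^b}\,|\xi|^{-(3-(a+b))}$; multiplying by $|\xi|^{3-(a+b)}$ and taking the essential supremum gives exactly the claimed bound on $\|uv\|_{\mathcal{PM}^{3-(a+b)}}$.

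The only genuinely delicate point in this program is checking that the constant $I_0(a,b)$ is actually finite. The integrand $|\hat\xi-\zeta|^{-a}|\zeta|^{-b}$ has three potentially bad regions: near $\zeta=0$ it behaves like $|\zeta|^{-b}$ (controlled by $b<3$), near $\zeta=\hat\xi$ it behaves like $|\hat\xi-\zeta|^{-a}$ (controlled by $a<3$), and at infinity it behaves like $|\zeta|^{-(a+b)}$. Thus the finiteness of $I_0(a,b)$, together with the range of $a$ and $b$ for which the scaling identity is meaningful, is precisely what dictates the admissible range of exponents in the statement. Once the constant $I_0(a,b)$ is pinned down as finite under the stated assumptions, the inequality follows immediately from the two displays above with no further work.
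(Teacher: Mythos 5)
Your proposal follows exactly the same route as the paper's proof: pass to the Fourier side, bound the convolution pointwise via the $\mathcal{PM}$ norms, and reduce everything to the Riesz composition integral $\int_{\RR^3}|\xi-\eta|^{-a}|\eta|^{-b}\,\mathrm{d}\eta$. The difficulty is that the one point you explicitly defer --- the finiteness of $I_0(a,b)$ --- is precisely where the argument fails under the stated hypothesis. As you yourself observe, the integrand behaves like $|\zeta|^{-(a+b)}$ at infinity, and $\int_{|\zeta|>2}|\zeta|^{-(a+b)}\,\mathrm{d}\zeta$ converges in $\RR^3$ only if $a+b>3$; under the lemma's assumption $a+b<3$ one has $I_0(a,b)=\infty$, so the pointwise bound is vacuous and the proof does not close. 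There is also an unexplained sign flip at the end: your (correct) scaling identity gives $\int_{\RR^3}|\xi-\eta|^{-a}|\eta|^{-b}\,\mathrm{d}\eta=I_0(a,b)\,|\xi|^{3-(a+b)}$, a \emph{positive} power of $|\xi|$, yet you conclude a bound by $|\xi|^{-(3-(a+b))}$; these agree only in the excluded borderline case $a+b=3$.

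To be fair, the paper's own one-line proof (and, it appears, the statement itself) contains the same sign error. The correct composition formula is $\int_{\RR^3}|\xi-\eta|^{-a}|\eta|^{-b}\,\mathrm{d}\eta=C\,|\xi|^{3-a-b}$ for $a,b\in(0,3)$ with $a+b>3$, which yields $|\widehat{uv}(\xi)|\le C\|u\|_{\mathcal{PM}^a}\|v\|_{\mathcal{PM}^b}\,|\xi|^{-((a+b)-3)}$, i.e.\ $\|uv\|_{\mathcal{PM}^{(a+b)-3}}\le C\|u\|_{\mathcal{PM}^a}\|v\|_{\mathcal{PM}^b}$. That the hypothesis must be $a+b>3$ is also visible on the extremal elements: $\widehat{u}=|\xi|^{-a}$ and $\widehat{v}=|\xi|^{-b}$ correspond to $c|x|^{a-3}$ and $c'|x|^{b-3}$, whose product $cc'|x|^{a+b-6}$ is not even locally integrable when $a+b<3$. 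The corrected version (hypothesis $a+b>3$, target space $\mathcal{PM}^{(a+b)-3}$) is the one actually invoked in Step 4 of the proof of Theorem \ref{THM.SNS}, where the factors lie in $\mathcal{PM}^2$ and $\mathcal{PM}^{1+2\alpha}$. So your strategy is the intended one, but you cannot ``pin down $I_0(a,b)$ as finite under the stated assumptions''; carrying out that check forces the corrected hypothesis and exponent.
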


To study a regularity of solutions to equation \eqref{FDuh}, we need
some imbedding properties of spaces $\mathcal{PM}^a$. We formulate then in the following three lemmas.
\begin{lemma}\label{lem:interpol}
Let $0\leq a<b<3$ and $b>\frac{3}{2}$.
Then there
exists a positive constant $C=C(a,b,q)$  such that
for all $v\in \mathcal{PM}^a\cap\mathcal{PM}^b$ we have
\begin{equation}
\|v\|_{L^q(\RR^3)} \leq C\|v\|_{\mathcal{PM}^a}^{1-\beta}
\|v\|_{\mathcal{PM}^b}^\beta\quad\text{for all}\quad q\in I=\begin{cases}[2,\frac{3}{3-b}), \quad&\text{if}\,\,\,\,0\leq a<\frac32\\
(\frac{3}{3-a},\frac{3}{3-b}), \quad&\text{if}\,\,\,\,\frac32\leq a<3
\end{cases},
\label{interpol}
\end{equation}
where $\beta= \frac{q(3-a)-3}{q(b-a)}$.
\end{lemma}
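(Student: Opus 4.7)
The plan is to reduce the $L^q$-bound for $v$ to an $L^{q'}$-bound for $\widehat{v}$ via the Hausdorff--Young inequality, and then to control $\|\widehat{v}\|_{L^{q'}}$ by splitting the frequency domain at a radius $R$ which will be optimized. The starting point is that any $v\in\mathcal{PM}^a\cap\mathcal{PM}^b$ satisfies the pointwise bound
\[
|\widehat{v}(\xi)|\le\min\bigl\{\|v\|_{\mathcal{PM}^a}|\xi|^{-a},\;\|v\|_{\mathcal{PM}^b}|\xi|^{-b}\bigr\},
\]
which makes the small-frequency regime naturally controlled by the $\mathcal{PM}^a$-norm and the large-frequency regime by the $\mathcal{PM}^b$-norm, since $a<b$.

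First, I would invoke the Hausdorff--Young inequality, $\|v\|_{L^q(\RR^3)}\le C\|\widehat{v}\|_{L^{q'}(\RR^3)}$ for $q\ge2$ with $1/q+1/q'=1$, which immediately forces the lower bound $q\ge 2$. Next, splitting $\int_{\RR^3}|\widehat{v}(\xi)|^{q'}\,\mathrm{d}\xi$ into $\{|\xi|\le R\}$ and $\{|\xi|>R\}$ and using the $\mathcal{PM}^a$ bound on the first piece and the $\mathcal{PM}^b$ bound on the second piece produces
\[
\|\widehat{v}\|_{L^{q'}}^{q'}\le C\bigl(\|v\|_{\mathcal{PM}^a}^{q'}R^{3-aq'}+\|v\|_{\mathcal{PM}^b}^{q'}R^{3-bq'}\bigr),
\]
provided that the two radial integrals are finite, i.e.\ $aq'<3<bq'$. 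Rewriting these conditions with $q'=q/(q-1)$ gives exactly $3/(3-a)<q<3/(3-b)$ (with the left inequality being vacuous when $a<3/2$ since then $3/(3-a)<2$), which together with $q\ge 2$ yields the two alternative ranges for $q$ in the statement.

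Finally, I would optimize in $R$ by equating the two terms on the right-hand side, which yields $R=(\|v\|_{\mathcal{PM}^b}/\|v\|_{\mathcal{PM}^a})^{1/(b-a)}$ and produces the bound
\[
\|\widehat{v}\|_{L^{q'}}^{q'}\le C\,\|v\|_{\mathcal{PM}^a}^{q'(1-\beta)}\|v\|_{\mathcal{PM}^b}^{q'\beta},
\]
with $\beta=(3-aq')/(q'(b-a))$. A short algebraic check using $q'=q/(q-1)$ shows that this expression equals $\bigl(q(3-a)-3\bigr)/\bigl(q(b-a)\bigr)$, matching the formula in the lemma; taking $q'$-th roots and combining with Hausdorff--Young concludes the proof.

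There is no conceptual obstacle here: the whole argument is an interpolation between two scale-homogeneous frequency bounds, carried out by Plancherel/Hausdorff--Young plus a cut-off optimization. The only delicate point is bookkeeping, both in verifying that the two integrability thresholds combine to give precisely the stated interval $I$ (and explaining the dichotomy $a<3/2$ versus $a\ge 3/2$), and in confirming that the optimized exponent agrees with the explicit $\beta$ appearing in \eqref{interpol}.
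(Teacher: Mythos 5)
Your proposal is correct and follows essentially the same route as the paper: Hausdorff--Young to pass to $\|\widehat{v}\|_{L^{q'}}$, a frequency split at radius $R$ using the $\mathcal{PM}^a$ bound for $|\xi|\le R$ and the $\mathcal{PM}^b$ bound for $|\xi|>R$ (with the same integrability conditions $aq'<3<bq'$ giving the interval $I$), and optimization in $R$ yielding the stated $\beta$. No substantive differences to report.
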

\begin{proof}
In the particular case of $a=2$, this Lemma was proved in \cite[Lemma 7.4]{CK04}. For the completeness of the exposition, we show the general case.
Using a standard approximation procedure one may assume that $v$ is smooth and rapidly decreasing. Since $q\geq2$, by the
Hausdorff--Young inequality  with $1/p+1/q=1$ and $p\in [1,2]$ and
the definition of the $\mathcal{PM}^a$-norm we obtain
\begin{equation}
\begin{split}
\|v\|_q^p\leq& C\|\widehat v \|_p^p \leq C\|v\|_{\mathcal{PM}^a}^p \int_{|\xi|\leq
R}
{1\over |\xi|^{ap}}\;\mathrm{d}\xi +C\|v\|_{\mathcal{PM}^b}^p \int_{|\xi|> R}
{1\over |\xi|^{bp}}\;\mathrm{d}\xi\nonumber\\
\leq& C\|v\|_{\mathcal{PM}^a}^pR^{3-ap}+C\|v\|_{\mathcal{PM}^b}^pR^{3-bp}\label{interpol2}
\end{split}
\end{equation}
for all $R>0$ and $C$ independent of $v$ and $R$. In these
calculations, we require $ap<3$ which is equivalent to $q>3/(3-a)$.
Moreover, we have to assume that $bp>3$ which leads to the inequality
$q<3/(3-b)$. Now, we optimize inequality \eqref{interpol2} with respect
to $R$ to get formula \eqref{interpol}.
\end{proof}
\begin{lemma}\label{lem.imbed}
Let  $\beta>\frac{5}{2}$. Then there exist a constant $C>0$ such that
\begin{equation}\label{eq.imbed}
\|\nabla\omega\|_{L^2(\RR^3)}\leq C\|\omega\|^{1-\theta}_{\mathcal{PM}^2}\|\omega\|^{\theta}_{\mathcal{PM}^\beta}\quad\text{for all}\quad \omega\in\mathcal{PM}^2\cap\mathcal{PM}^\beta,
\end{equation}
where $\theta=\frac{2\beta-5}{2(\beta-2)}$.
\end{lemma}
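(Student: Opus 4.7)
The plan is to use Plancherel's identity and then split the resulting Fourier integral into low- and high-frequency pieces, estimating each piece with the appropriate $\mathcal{PM}^a$-norm and optimizing the splitting radius. This is essentially the same interpolation technique used in the proof of Lemma \ref{lem:interpol}, but adapted to the $L^2$ setting (which is what lets us treat the gradient exactly, with no need for Hausdorff--Young).

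\medskip

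\textbf{Step 1: Reduce to a Fourier integral.} By Plancherel's theorem applied componentwise,
\[
\|\nabla\omega\|_{L^2(\RR^3)}^2 = C\int_{\RR^3}|\xi|^2\,|\widehat\omega(\xi)|^2\,\mathrm{d}\xi.
\]
By a density argument I may assume $\omega$ is smooth and Schwartz, so this is finite.

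\medskip

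\textbf{Step 2: Split at a radius $R>0$ and estimate both pieces.} Write the integral as the sum of the pieces on $\{|\xi|\le R\}$ and $\{|\xi|>R\}$. On the low-frequency piece I use $|\widehat\omega(\xi)|\le \|\omega\|_{\mathcal{PM}^2}|\xi|^{-2}$, giving
\[
\int_{|\xi|\le R}|\xi|^2|\widehat\omega|^2\,\mathrm{d}\xi \;\le\; \|\omega\|_{\mathcal{PM}^2}^{\,2}\int_{|\xi|\le R}\frac{\mathrm{d}\xi}{|\xi|^2} \;\le\; C\,\|\omega\|_{\mathcal{PM}^2}^{\,2}\,R.
\]
On the high-frequency piece I use $|\widehat\omega(\xi)|\le \|\omega\|_{\mathcal{PM}^\beta}|\xi|^{-\beta}$, giving
\[
\int_{|\xi|>R}|\xi|^{2-2\beta}\,\mathrm{d}\xi \;\le\; C\,\|\omega\|_{\mathcal{PM}^\beta}^{\,2}\,R^{5-2\beta},
\]
where convergence of the tail integral requires precisely $2\beta-2>3$, i.e.\ $\beta>5/2$; this is where the hypothesis enters, and it is the only nontrivial constraint.

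\medskip

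\textbf{Step 3: Optimize in $R$.} Balancing the two contributions $A R$ and $B R^{5-2\beta}$ with $A=\|\omega\|_{\mathcal{PM}^2}^{\,2}$ and $B=\|\omega\|_{\mathcal{PM}^\beta}^{\,2}$ (both are comparable at the minimum, up to a constant), the optimal radius is
\[
R \;\sim\; \bigl(B/A\bigr)^{1/(2\beta-4)} \;=\; \bigl(\|\omega\|_{\mathcal{PM}^\beta}/\|\omega\|_{\mathcal{PM}^2}\bigr)^{1/(\beta-2)}.
\]
Substituting back yields $\|\nabla\omega\|_{L^2}^{\,2}\le C\,\|\omega\|_{\mathcal{PM}^2}^{\,2(1-\theta)}\|\omega\|_{\mathcal{PM}^\beta}^{\,2\theta}$ with $\theta$ determined by the arithmetic; taking a square root gives \eqref{eq.imbed}. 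The exponent can also be verified directly from the scaling $\omega(x)\mapsto\omega(\lambda x)$, which scales $\|\nabla\omega\|_{L^2}$ as $\lambda^{-1/2}$ and $\|\omega\|_{\mathcal{PM}^a}$ as $\lambda^{a-3}$, and pins down $\theta$ uniquely.

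\medskip

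\textbf{Anticipated obstacle.} There is no serious obstacle: every step is a one-line Fourier-side estimate, and the only hypothesis $\beta>5/2$ appears naturally as the integrability threshold of the tail. The only thing to watch is the bookkeeping of the exponent $\theta$ (and its sign / placement) after the optimization, which is easily cross-checked against the scaling of both sides of the inequality.
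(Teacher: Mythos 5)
Your proposal is correct and is essentially identical to the paper's own proof: Plancherel, a low/high frequency split at radius $R$ with the $\mathcal{PM}^2$-bound below $R$ and the $\mathcal{PM}^\beta$-bound above, convergence of the tail forcing $\beta>\frac52$, and optimization of $R$ at $R^{\beta-2}=\|\omega\|_{\mathcal{PM}^\beta}/\|\omega\|_{\mathcal{PM}^2}$. One caveat on the step you (and the paper) leave implicit: carrying out the final substitution, or the scaling check you propose with $\|\nabla\omega\|_{L^2}\sim\lambda^{-1/2}$ and $\|\omega\|_{\mathcal{PM}^a}\sim\lambda^{a-3}$, actually places the exponent $\theta=\frac{2\beta-5}{2(\beta-2)}$ on the $\mathcal{PM}^2$ factor and $1-\theta=\frac{1}{2(\beta-2)}$ on the $\mathcal{PM}^\beta$ factor, so the placement printed in the statement of the lemma is reversed (a harmless typo for its later use, where only $\nabla\omega\in L^2_{\rm loc}$ is needed, but one your "determined by the arithmetic" shortcut would not catch).
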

\begin{proof}
According to the Plancherel theorem, we have
$\|\nabla\omega\|_{L^2(\RR^3)}^2= \int_{\RR^3}|\xi|^2|\widehat{\omega}(\xi)|^2\,\mathrm{d}\xi$.
As in the proof of Lemma \ref{lem:interpol}, we decompose this integral with respect to $\xi$ into two parts $\int_{|\xi|\leq R}|\xi|^2|\widehat{\omega}(\xi)|^2\,\mathrm{d}\xi$ and
$\int_{|\xi|>R}|\xi|^2|\widehat{\omega}(\xi)|^2\,\mathrm{d}\xi$, where $R$ is a positive number to be fixed later. Next, we deal with the each term separately.

In the case of the integral over the low frequency domain $|\xi|\leq R$, we estimate  as follows
\begin{equation*}
\int_{|\xi|\leq R}|\xi|^2|\widehat{\omega}(\xi)|^2\,\mathrm{d}\xi\leq \|\omega\|_{\mathcal{PM}^2}^2\int_{|\xi|\leq R}|\xi|^{-2}\,\mathrm{d}\xi\leq C N \|\omega\|_{\mathcal{PM}^2}^2.
\end{equation*}
For high frequencies $|\xi|>R$, by the definition of space $\mathcal{PM}^\beta$, we have
\begin{equation*}
\int_{|\xi|>R}|\xi|^2|\widehat{\omega}(\xi)|^2\,\mathrm{d}\xi\leq \|\omega\|_{\mathcal{PM}^\beta}^2\int_{|\xi|> R}|\xi|^{-2\beta+2}\,\mathrm{d}\xi\leq C N^{-2\beta+5} \|\omega\|_{\mathcal{PM}^\beta}^2.
\end{equation*}
Choosing  $R^{\beta-2}= \frac{\|\omega\|_{\mathcal{PM}^\beta}}{\|\omega\|_{\mathcal{PM}^2}},$
we obtain estimate \eqref{eq.imbed}.
\end{proof}

Next, we show that tempered distributions from $\mathcal{PM}^2$ are in fact functions from $L^{3,\infty}(\RR^3).$
\begin{lemma}\label{lem.weakl-3}
There exists a constant $C$ such that
\begin{equation}\label{eq.weakl-3}
\|u\|_{L^{3,\infty}(\RR^3)}\leq C\|u\|_{\mathcal{PM}^2}\quad\text{for all}\quad u\in\mathcal{PM}^2.
\end{equation}
\end{lemma}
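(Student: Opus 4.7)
The plan is to establish the weak-type bound $|\{x\in\RR^3:|u(x)|>\lambda\}|\leq C\|u\|_{\mathcal{PM}^2}^3/\lambda^3$ for every $\lambda>0$, which is equivalent to $u\in L^{3,\infty}$ with the desired estimate. The idea is the standard Marcinkiewicz-style argument of splitting $u$ in frequency, using the $\mathcal{PM}^2$ bound to control an $L^\infty$ piece and an $L^2$ piece, then optimizing.

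Set $M=\|u\|_{\mathcal{PM}^2}$, so $|\widehat u(\xi)|\leq M|\xi|^{-2}$ for a.e.\ $\xi$. For a parameter $R>0$ to be chosen, decompose $u=u_1+u_2$ by defining $\widehat u_1=\widehat u\,\chi_{\{|\xi|\leq R\}}$ and $\widehat u_2=\widehat u\,\chi_{\{|\xi|>R\}}$. The pointwise control on $\widehat u$ immediately gives $\widehat u_1\in L^1(\RR^3)$ with $\|\widehat u_1\|_{L^1}\leq CMR$ and $\widehat u_2\in L^2(\RR^3)$ with $\|\widehat u_2\|_{L^2}^2\leq CM^2/R$, so by the trivial bound for the inverse Fourier transform and Plancherel, respectively, $\|u_1\|_{L^\infty}\leq CMR$ and $\|u_2\|_{L^2}^2\leq CM^2/R$.

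Given $\lambda>0$, choose $R$ so that $CMR=\lambda/2$, i.e.\ $R=\lambda/(2CM)$. Then $\{|u|>\lambda\}\subset\{|u_2|>\lambda/2\}$, and Chebyshev's inequality yields
\[
|\{|u|>\lambda\}|\leq\frac{4}{\lambda^2}\|u_2\|_{L^2}^2\leq\frac{CM^2}{\lambda^2 R}=\frac{CM^3}{\lambda^3},
\]
which is exactly the desired weak $L^3$ estimate.

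The decomposition uses only the $\mathcal{PM}^2$ bound on $\widehat u$, so the proof is essentially self-contained. The only mild subtlety, rather than a true obstacle, is verifying that the frequency split is meaningful for a general tempered distribution $u\in\mathcal{PM}^2$: this is immediate because the pointwise estimate $|\widehat u(\xi)|\leq M|\xi|^{-2}$ places $\widehat u_1$ in $L^1$ and $\widehat u_2$ in $L^2$, so both $u_1$ and $u_2$ are honest functions and $u=u_1+u_2$ in $\mathcal{S}'(\RR^3)$.
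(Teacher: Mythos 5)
Your proof is correct, but it takes a genuinely different route from the paper. You give a direct real-variable argument: split $\widehat u$ at frequency $R$, use $|\widehat u(\xi)|\leq M|\xi|^{-2}$ to place the low-frequency piece in $L^1$ (hence $u_1\in L^\infty$ with $\|u_1\|_{L^\infty}\leq CMR$) and the high-frequency piece in $L^2$ via Plancherel, then optimize $R$ against $\lambda$ and apply Chebyshev to get the distribution-function bound $|\{|u|>\lambda\}|\leq CM^3\lambda^{-3}$. The paper instead argues by duality: it pairs $u$ against a test function $\varphi$, writes $\langle u,\varphi\rangle=\int\widehat u\,\widehat\varphi$, applies the H\"older inequality in Lorentz spaces together with the Hausdorff--Young inequality $\|\widehat\varphi\|_{L^{3,1}}\leq\|\varphi\|_{L^{3/2,1}}$, and concludes from $\bigl(L^{3/2,1}(\RR^3)\bigr)^*=L^{3,\infty}(\RR^3)$. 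Your argument is more elementary and self-contained (no Lorentz-space duality or O'Neil-type inequalities needed), and the two mild points you should be aware of are both harmless: the quantity $\sup_{\lambda>0}\lambda|\{|u|>\lambda\}|^{1/3}$ is only a quasi-norm, but it is equivalent to the $L^{3,\infty}$ norm for $p=3>1$ (an equivalence the paper itself invokes in Lemma 3.8), and your justification that $u=u_1+u_2$ is an honest measurable function is exactly right. The paper's duality route yields the norm bound directly and packages the embedding in a form that generalizes to other exponents, but for the statement at hand both proofs are complete.
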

\begin{proof}
The imbedding $\mathcal{PM}^2\subset L^{3,\infty}(\RR^3)$ has been noticed in \cite[Remark 3.2]{KPS13}. Here, we present its proof for the completeness of the exposition. First, we recall (see for example \cite{ONeil63}) the Hausdorff--Young inequality in the Lorentz space
\begin{equation}\label{eq.hausdorff-lor}
\|\widehat{f}\|_{L^{p,1}(\RR^3)}\leq\|f\|_{L^{p',1}(\RR^3)}
\end{equation}
for $p\geq2$ and $p'=\frac{p}{p-1}$ as well as the H\"older inequality in the Lorentz space
\begin{equation}\label{eq.holder-lor}
\|f\cdot g\|_{L^1(\RR^3)}\leq C\|f\|_{p,\infty(\RR^3)}\|g\|_{L^{p',1}(\RR^3)}.
\end{equation}
Thus, by the Plancherel theorem, the definition of the norm in $\mathcal{PM}^2$, and inequalities \eqref{eq.hausdorff-lor}-\eqref{eq.holder-lor}, for every test functions $\varphi\in\mathcal{S}(\RR^3)$, we obtain
\begin{align*}
\big|\langle u,\varphi\rangle\big|=&\Big|\int_{\RR^3}\widehat{u}(\xi)\widehat{\varphi}(\xi)\,\mathrm{d}\xi\Big|
\leq\|u\|_{\mathcal{PM}^2}\int_{\RR^3}\frac{1}{|\xi|^2}\widehat{\varphi}(\xi)\,\mathrm{d}\xi\\
\leq&\|u\|_{\mathcal{PM}^2}\big\||\cdot|^{-2}\big\|_{L^{\frac32,\infty}(\RR^3)}\|\widehat{\varphi}\|_{L^{3,1}(\RR^3)}
\leq  C\|u\|_{\mathcal{PM}^2}\|\varphi\|_{L^{\frac32,1}(\RR^3)}.
\end{align*}
Since $\mathcal{S}(\RR^3)\subset L^{3,1}(\RR^3)$ is a dense subset, we  obtain that the tempered distribution $u\in\mathcal{PM}^2$ defines a bounded linear functional on $L^{3,1}(\RR^3)$ with the norm estimated by $C\|u\|_{\mathcal{PM}^2}$. Since $\big(L^{\frac32,1}(\RR^3)\big)^*=L^{3,\infty}(\RR^3)$, we obtain immediately inequality \eqref{eq.weakl-3}.
\end{proof}

Finally, we prove a simple inequality in weak $L^p$-spaces.
\begin{lemma}\label{lem.bil-weakl3}
There exists a constant $C>0$ such that for every $u\in \big(L^{p,\infty}(\RR^3)\big)^3$ with $p\geq 2$, we have
$
\|u\otimes u\|_{L^{\frac{p}{2},\infty}(\RR^3)}\leq C\|u\|^2_{L^{p,\infty}(\RR^3)}.
$
\end{lemma}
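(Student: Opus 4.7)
The plan is to reduce this tensor estimate to a scalar H\"older-type inequality in weak $L^p$ spaces. By the componentwise definition of the matrix-valued norm, $u\otimes u = (u_iu_j)_{3\times 3}$ satisfies
$$\|u\otimes u\|_{L^{p/2,\infty}(\RR^3)}\leq C\max_{i,j\in\{1,2,3\}}\|u_iu_j\|_{L^{p/2,\infty}(\RR^3)},$$
and trivially $\|u_i\|_{L^{p,\infty}(\RR^3)}\leq\|u\|_{L^{p,\infty}(\RR^3)}$ for each component. So it suffices to prove the scalar statement $\|fg\|_{L^{p/2,\infty}(\RR^3)}\leq C\|f\|_{L^{p,\infty}(\RR^3)}\|g\|_{L^{p,\infty}(\RR^3)}$ for arbitrary $f,g\in L^{p,\infty}(\RR^3)$ with $p\geq 2$.

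For the scalar bound I would carry out a direct distribution-function argument. For each $\lambda>0$ and each $t>0$, the pointwise inclusion $\{|fg|>\lambda\}\subset\{|f|>t\}\cup\{|g|>\lambda/t\}$ together with the defining estimate $|\{|h|>s\}|\leq\|h\|_{L^{p,\infty}}^p s^{-p}$ gives
$$\bigl|\{|fg|>\lambda\}\bigr|\leq \|f\|_{L^{p,\infty}}^p\,t^{-p}+\|g\|_{L^{p,\infty}}^p\,t^p\lambda^{-p}.$$
Balancing the two summands by choosing $t^2=\lambda\|f\|_{L^{p,\infty}}/\|g\|_{L^{p,\infty}}$ yields $\bigl|\{|fg|>\lambda\}\bigr|\leq 2\lambda^{-p/2}\bigl(\|f\|_{L^{p,\infty}}\|g\|_{L^{p,\infty}}\bigr)^{p/2}$, and multiplying by $\lambda^{p/2}$ and taking the supremum over $\lambda>0$ finishes the proof with $C=2^{2/p}$.

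Equivalently, the same conclusion follows from the generalized H\"older inequality in Lorentz spaces (see O'Neil \cite{ONeil63}, already invoked in the proof of Lemma \ref{lem.weakl-3}), which asserts $\|fg\|_{L^{r,\infty}}\leq C\|f\|_{L^{p_1,\infty}}\|g\|_{L^{p_2,\infty}}$ whenever $1/r=1/p_1+1/p_2$, applied with $p_1=p_2=p$. No serious obstacle is anticipated: the statement is a textbook consequence of the Lorentz-space machinery used throughout this section, and is really a formality needed only so that the quadratic nonlinearity $u\otimes u$ from \eqref{Bf} lies in a space where the subsequent analysis can be carried out.
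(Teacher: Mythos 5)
Your proof is correct. The paper's own argument is also a distribution-function estimate, but it exploits the fact that the two factors in $u\otimes u$ are the same function: it uses the single inclusion $\{x:\,|(u\otimes u)(x)|\geq\lambda\}\subset\{x:\,|u(x)|\geq\sqrt{\lambda}\}$, pulls the square root out of the supremum, and is done with no free parameter to optimize. You instead prove the genuinely bilinear weak H\"older inequality $\|fg\|_{L^{p/2,\infty}}\leq C\|f\|_{L^{p,\infty}}\|g\|_{L^{p,\infty}}$ via the two-set inclusion $\{|fg|>\lambda\}\subset\{|f|>t\}\cup\{|g|>\lambda/t\}$ followed by balancing the two terms; the computation with $t^2=\lambda\|f\|_{L^{p,\infty}}/\|g\|_{L^{p,\infty}}$ checks out and yields the constant $2^{2/p}$. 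Your route is slightly longer but strictly more general: it does not require the two factors to coincide and is precisely the weak-type case of O'Neil's product inequality, which the paper already cites elsewhere, whereas the paper's argument is a one-line shortcut available only for the diagonal product $u\otimes u$. The componentwise reduction at the start of your argument is consistent with the paper's convention that norms of vector and matrix valued functions are taken componentwise, so both proofs establish the lemma as stated.
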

\begin{proof}
We use the well-known fact that the norm in the Marcinkiewicz space $L^{p,\infty}(\RR^3)$ is comparable with the quantity $\sup_{\lambda\geq0}\lambda\big|\{x\in\RR^3:\,|u(x)|\geq\lambda\}\big|^{\frac1p}, $ where $|A|$ denotes the Lebesgue measure of a set $A\subset\RR^3$. Thus, by a direct calculation, we obtain the following inequalities
\begin{align*}
\sup_{\lambda\geq0}\lambda\big|\big\{x\in\RR^3:\big|(u\otimes u)(x)\big|\geq\lambda\big\}\big|^{\frac{2}{p}}
\leq&\sup_{\lambda\geq0}\lambda\big|\{x\in\RR^3:|u(x)|\geq\sqrt{\lambda}\}\big|^{\frac{2}{p}}\\
=&\left(\sup_{\lambda\geq0}\sqrt{\lambda}\big|\{x\in\RR^3:|u(x)|\geq\sqrt{\lambda}\}\big|^{\frac1p}\right)^2 \\
=&\left(\sup_{\lambda\geq0} \lambda \big|\{x\in\RR^3:|u(x)|\geq \lambda \}\big|^{\frac1p}\right)^2 \\
\leq&C\|u\|^2_{L^{p,\infty}(\RR^3)}
\end{align*}
which complete the proof of Lemma \ref{lem.bil-weakl3}.
\end{proof}

%%%%%%%%%%%%%%%%%%%%%%%
\subsection{Slezkin-Landau solution}
To conclude this section, we recall properties of the Slezkin-Landau solution given by formula \eqref{sing-sol}.
\begin{proposition}\label{prop:sing-sol}
Let $V^c=(V_1^c,V_2^c,V_3^c)$ and $Q^c$ be defined by \eqref{sing-sol}. For every
test function $\varphi \in \big(C^\infty_{\rm c} (\RR^3)\big)^3$ the following
equalities hold true:
\begin{equation}
\int_{\RR^3} V^c\cdot \nabla \varphi\, \mathrm{d}x =0, \label{w-div}
\end{equation}
and
\begin{equation}
\int_{\RR^3} \left(\nabla V_k^c\cdot \nabla \varphi -
V^c_kV^c\cdot \nabla \varphi -Q^c {\partial\over \partial x_k} \varphi\right) \mathrm{d}x=
\left\{
\begin{array}{lcl}
\kappa(c)\varphi(0) &\mbox{if} & k=1\\
0&\mbox{if} & k=2,3,
\end{array}
\right.\label{w-eq}
\end{equation}
where
\begin{equation}\kappa(c) =
{8\pi c\over 3(c^2-1)} \left(2+6c^2 -3c(c^2-1)\log\left({c+1\over
c-1}\right) \right).\label{fact.b}
\end{equation}
In particular, the function $\kappa=\kappa(c)$ is decreasing on $(-\infty, -1)$
and $(1,+\infty)$.
Moreover, $\lim_{c \searrow 1} \kappa(c)=\infty$, $\lim_{c \nearrow -1}
\kappa(c)=-\infty$ and
$\lim_{\vert c\vert \to\infty } \kappa(c)=0$.
\end{proposition}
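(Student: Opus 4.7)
\emph{Overall strategy.} Since $(V^c,Q^c)$ is smooth and satisfies the stationary Navier-Stokes system pointwise on $\RR^3\setminus\{0\}$, all distributional information must concentrate at the origin. My plan is to rewrite each integral as $\lim_{\eps\to 0}\int_{|x|>\eps}$, integrate by parts on $\{|x|>\eps\}$, and exploit that the resulting bulk integrals vanish by the pointwise equation. Only surface integrals on $\partial B_\eps$ survive, and the homogeneities of $V^c$ (degree $-1$) and $Q^c$ (degree $-2$) render these boundary integrals scale-invariant, directly producing a multiple of $\varphi(0)$ in the limit. Local integrability is immediate: $V^c,Q^c,\nabla V^c,V^c\otimes V^c$ have at worst $|x|^{-2}$ singularities (and remain smooth away from $0$ because $|c|>1$ keeps $c|x|-x_1$ bounded away from zero), so every integral in \eqref{w-div} and \eqref{w-eq} is well-defined.

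\emph{Proof of \eqref{w-div} and bulk cancellation for \eqref{w-eq}.} For \eqref{w-div}, integration by parts on $\{|x|>\eps\}$ yields $\int_{|x|=\eps}(V^c\cdot\nu)\varphi\, dS$ with outward normal $\nu=-x/|x|$, since $\Div V^c=0$ pointwise. Because $|V^c|\leq C\eps^{-1}$ on $\partial B_\eps$ and $|\partial B_\eps|=4\pi\eps^2$, this boundary term is $O(\eps)$ and vanishes in the limit. For \eqref{w-eq}, applying integration by parts on $\Omega_\eps=\{|x|>\eps\}$ to each of the three terms produces the bulk integrand $\bigl(-\Delta V_k^c + \partial_j(V_k^c V_j^c) + \partial_k Q^c\bigr)\varphi$, which vanishes identically on $\Omega_\eps$ thanks to the pointwise equation and $\Div V^c = 0$. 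What remains is the boundary contribution
\begin{equation*}
B_\eps^k = \int_{|x|=\eps}\bigl[\partial_\nu V_k^c - V_k^c(V^c\cdot\nu) - Q^c\nu_k\bigr]\varphi(x)\, dS.
\end{equation*}

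\emph{Limit passage and symmetry reduction.} Parametrizing $x=\eps\omega$ with $\omega\in S^2$ and using that each term in the bracket scales as $\eps^{-2}$ by homogeneity, the $\eps^{-2}$ cancels the $\eps^2$ from $dS$ to give
\begin{equation*}
B_\eps^k = \int_{S^2}G_k(\omega)\varphi(\eps\omega)\, d\omega,\qquad G_k(\omega) = V_k^c(\omega) + V_k^c(\omega)\bigl(V^c(\omega)\cdot\omega\bigr) + Q^c(\omega)\omega_k.
\end{equation*}
Letting $\eps\to 0$ and using continuity of $\varphi$, the limit equals $\varphi(0)\int_{S^2}G_k\, d\omega$. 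From \eqref{sing-sol}, $V_2^c$ and $V_3^c$ are proportional to $x_2$ and $x_3$ respectively, while $V_1^c$ and $Q^c$ depend only on $x_1$ and $|x|$; a direct parity check then shows $G_2$ is odd in $\omega_2$ and $G_3$ is odd in $\omega_3$, so both integrate to zero. This gives \eqref{w-eq} with vanishing right-hand side for $k=2,3$.

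\emph{Computing $\kappa(c)$ and the main obstacle.} For $k=1$, the axisymmetry of the Slezkin-Landau profile about the $x_1$-axis reduces the $S^2$-integral, via $s=\omega_1\in[-1,1]$, to
\begin{equation*}
\kappa(c) = 2\pi\int_{-1}^{1}G_1(s)\, ds,
\end{equation*}
where $G_1(s)$ is rational in $s$ with denominator $(c-s)^4$. The substitution $u=c-s$ (or partial fractions) reduces the evaluation to elementary antiderivatives in $u^{-n}$ for $n=1,\dots,4$, producing terms like $(c\pm 1)^{-1},(c\pm 1)^{-2}$ together with a single logarithm $\log\frac{c+1}{c-1}$. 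Collecting yields the factorized formula \eqref{fact.b}. Monotonicity of $\kappa$ on $(-\infty,-1)\cup(1,+\infty)$ and the limits at $\pm 1$ are then routine calculus on \eqref{fact.b}; the limit $\kappa(c)\to 0$ as $|c|\to\infty$ requires the expansion $(c^2-1)\log\frac{c+1}{c-1} = 2c + \tfrac{2}{3c} + O(c^{-3})$ to reveal the cancellation of the $6c^2$ term. \emph{The main obstacle is this last explicit computation:} while every antiderivative is elementary, the bookkeeping required to assemble the precise combination $2+6c^2-3c(c^2-1)\log\frac{c+1}{c-1}$ is intricate, and tracking the cancellation in the $|c|\to\infty$ asymptotics demands care.
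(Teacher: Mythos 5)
The paper does not actually prove this proposition: it states that ``the detailed proof of Proposition \ref{prop:sing-sol} is given in \cite[Proposition 2.1]{CK04},'' so there is no in-text argument to compare against. Your outline is the standard excision argument that one would expect to find in that reference, and its structure is sound: local integrability of $V^c$, $\nabla V^c$, $V^c\otimes V^c$, $Q^c$ (singularities at worst $|x|^{-2}$) justifies writing each integral as $\lim_{\eps\to0}\int_{|x|>\eps}$; the bulk terms vanish by the pointwise equations; the outward normal of $\{|x|>\eps\}$ on $\partial B_\eps$ is $-x/|x|$, and Euler's identity $x\cdot\nabla V^c_k=-V^c_k$ for the degree $-1$ homogeneous $V^c_k$ gives exactly your bracket $G_k(\omega)=V^c_k(1+V^c\cdot\omega)+Q^c\omega_k$ with the $\eps^{-2}$ scaling cancelling $dS=\eps^2\,d\omega$; and the parity of $V^c_2,V^c_3$ in $x_2,x_3$ kills $k=2,3$. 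The proof of \eqref{w-div} via the $O(\eps)$ flux estimate is also correct.

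The genuine gap is that the quantitatively essential content of the proposition --- the explicit value \eqref{fact.b}, which is what the paper later uses to ensure $\kappa(c)\neq0$ and $\kappa(c)\to0$, together with the monotonicity of $\kappa$ --- is described but never executed: you reduce to $\kappa(c)=2\pi\int_{-1}^1G_1(s)\,ds$ and then assert the answer. As written this does not establish \eqref{fact.b}. There is also a small slip in your large-$|c|$ expansion: $(c^2-1)\log\frac{c+1}{c-1}=2c-\frac{4}{3c}+O(c^{-3})$, not $2c+\frac{2}{3c}+O(c^{-3})$; with the correct expansion the parenthesis in \eqref{fact.b} tends to $6$ (not to $0$), and $\kappa(c)\to0$ because the prefactor $\frac{8\pi c}{3(c^2-1)}=O(1/c)$ --- the cancellation of the $6c^2$ term is indeed needed, but only to make the parenthesis $o(c)$. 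The conclusion is unaffected, but the stated expansion is wrong as a formula.
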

The detailed proof of Proposition \ref{prop:sing-sol} is given in \cite[Proposition 2.1]{CK04}.

We conclude this section by showing that the Slezkin-Landau solution is small in the sense of the $\mathcal{PM}^2$-norm for large $|c|$.
\begin{lemma}\label{lem.Landau-small}
Let $V^c(x)$ be the Slezkin-Landau velocity field given by formula \eqref{sing-sol}. Then, there exists a constant $K>0$ independent of $c$ such that
\begin{equation}\label{eq.Landau-small}
\|V^{c}\|_{\mathcal{PM}^2}\leq \frac{K}{|c|}\quad\text{for all}\quad |c|>2.
\end{equation}
\end{lemma}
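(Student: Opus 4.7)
I intend to use Proposition~\ref{prop:sing-sol} to view $V^c$ as a solution in $\mathcal{PM}^2$ of a Fourier-side integral equation, and then to close a Banach-type fixed-point argument exploiting that the forcing is of size $\mathcal{O}(1/|c|)$. First I note that $V^c$ is homogeneous of degree $-1$ and smooth on $\RR^3\setminus\{0\}$, so $\widehat{V^c}$ is smooth on $\RR^3\setminus\{0\}$ and homogeneous of degree $-2$; in particular $V^c\in\mathcal{PM}^2$ for every $|c|>1$, with $\|V^c\|_{\mathcal{PM}^2}=\sup_{|\xi|=1}|\widehat{V^c}(\xi)|$. Taking the Fourier transform in Proposition~\ref{prop:sing-sol}, applying the Leray matrix $\widehat{\mathbb{P}}(\xi)$ to eliminate the pressure and dividing by $|\xi|^2$ rewrites the stationary system for $V^c$ as
\begin{equation*}
\widehat{V^c}(\xi)=-\frac{\widehat{\mathbb{P}}(\xi)\cdot i\xi}{|\xi|^2}\widehat{V^c\otimes V^c}(\xi)+\frac{\kappa(c)}{|\xi|^2}\widehat{\mathbb{P}}(\xi)\bar{e}_1=:\widehat{\mathcal{B}(V^c,V^c)}(\xi)+\widehat{F_c}(\xi).
\end{equation*}

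Next, the bilinear estimate. Using $|\widehat{\mathbb{P}}(\xi)|\le 2$ and the convolution bound
\begin{equation*}
|\widehat{U\otimes W}(\xi)|\le\|U\|_{\mathcal{PM}^2}\|W\|_{\mathcal{PM}^2}\int_{\RR^3}\frac{\mathrm d\eta}{|\xi-\eta|^2|\eta|^2}=\frac{C_0}{|\xi|}\|U\|_{\mathcal{PM}^2}\|W\|_{\mathcal{PM}^2},
\end{equation*}
where the standard Riesz-potential identity in $\RR^3$ identifies the integral with $C_0/|\xi|$ (via homogeneity and rotation invariance), I obtain $\|\mathcal{B}(U,W)\|_{\mathcal{PM}^2}\le\eta\,\|U\|_{\mathcal{PM}^2}\|W\|_{\mathcal{PM}^2}$ with $\eta:=2C_0$. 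The forcing satisfies $\|F_c\|_{\mathcal{PM}^2}\le 2|\kappa(c)|$, and Taylor-expanding $\log\frac{c+1}{c-1}=\frac{2}{c}+\frac{2}{3c^3}+\cdots$ in formula~\eqref{fact.b} yields $|\kappa(c)|\le C_\kappa/|c|$ uniformly for $|c|\ge 2$.

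Inserting these bounds into the integral equation above produces the scalar quadratic inequality $\|V^c\|_{\mathcal{PM}^2}\le\eta\|V^c\|_{\mathcal{PM}^2}^2+2C_\kappa/|c|$. For $|c|\ge c_0$ large enough that $8\eta C_\kappa/|c|<1$, the quadratic polynomial $\eta s^2-s+2C_\kappa/|c|$ has two positive roots $s_-<s_+$, and the inequality forces $\|V^c\|_{\mathcal{PM}^2}\in[0,s_-]\cup[s_+,\infty)$, with $s_-=\mathcal{O}(1/|c|)$ and $s_+\ge 1/(2\eta)$ independent of $c$. To rule out the large branch I would use the pointwise bound $|V^c(x)|\le C/(|c||x|)$, read off directly from \eqref{sing-sol} for $|c|\gg 1$, which forces $V^c\to 0$ uniformly on $\{|x|=1\}$ as $|c|\to\infty$ and hence, via the integral equation, $\widehat{V^c}\to 0$ uniformly on $\{|\xi|=1\}$; so $\|V^c\|_{\mathcal{PM}^2}\to 0$ and, by continuity in $c$, the small branch is the only option for all $|c|\ge c_0$, delivering \eqref{eq.Landau-small} on that range. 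The compact residual range $|c|\in(2,c_0]$ is absorbed into a possibly larger $K$ by continuity of $c\mapsto\|V^c\|_{\mathcal{PM}^2}$ away from $c=\pm 1$.

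\textbf{Main obstacle.} The subtlest step is precisely the dichotomy argument above: it requires an a priori piece of control on $\|V^c\|_{\mathcal{PM}^2}$ that is not encoded in the integral equation alone, and I extract it from the explicit pointwise decay of $V^c$ as $|c|\to\infty$. The remaining ingredients---the Riesz-type bilinear estimate and the Taylor expansion of $\kappa(c)$---are essentially routine once the framework is in place.
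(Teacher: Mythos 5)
Your fixed-point/dichotomy strategy is genuinely different from the paper's proof, which is a direct Fourier estimate: from \eqref{sing-sol} one reads off $|D^\alpha V^c(x)|\le C(\alpha)|c|^{-1}|x|^{-1-|\alpha|}$ for every multiindex $\alpha$, uniformly in $|c|\ge2$, and a Littlewood--Paley decomposition of $\widehat{V^c}$ (low frequencies estimated via the size bound, high frequencies via the derivative bound with $|\alpha|=3$) then yields $|\widehat{V^c}(\xi)|\le C|c|^{-1}|\xi|^{-2}$ directly, with no fixed-point argument at all. Your bilinear estimate and the bound $|\kappa(c)|\le C_\kappa/|c|$ are fine, and the quadratic inequality does produce the dichotomy you describe. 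The problem is exactly the step you flag as the main obstacle: ruling out the large branch. The implication ``$V^c\to0$ uniformly on $\{|x|=1\}$, hence, via the integral equation, $\widehat{V^c}\to0$ uniformly on $\{|\xi|=1\}$'' is not valid. The Fourier transform does not respect pointwise domination: a homogeneous degree $-1$ field that is small in sup-norm on the unit sphere (equivalently, small in $L^{3,\infty}(\RR^3)$) need not have a Fourier transform that is small, or even bounded, on the unit sphere; the embedding of Lemma \ref{lem.weakl-3} goes in the opposite direction, $\mathcal{PM}^2\subset L^{3,\infty}(\RR^3)$. And the ``via the integral equation'' route is circular: to make $\widehat{\mathcal{B}(V^c,V^c)}$ small on the unit sphere you need $\|V^c\|_{\mathcal{PM}^2}$ small, which is precisely what you are trying to prove.

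The same objection applies to the closing appeal to ``continuity of $c\mapsto\|V^c\|_{\mathcal{PM}^2}$'': this is asserted rather than proved, and establishing it (together with finiteness of the norm uniformly on compact ranges of $c$) amounts to controlling $\sup_{|\xi|=1}|\widehat{V^c}(\xi)|$, i.e.\ essentially the content of the lemma itself. What is missing in both places is quantitative control of $\widehat{V^c}$ at high frequencies, and that is exactly what the uniform derivative bounds supply; they follow from the same algebraic rewriting of \eqref{sing-sol} that gives your size bound $|V^c(x)|\le C|c|^{-1}|x|^{-1}$, since the factors such as $\bigl(1-c x_1/|x|\bigr)^{-2}$ are bounded on $\RR^3$ together with all their derivatives, uniformly in $|c|\ge2$. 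Once those bounds are in hand, the direct Littlewood--Paley estimate proves the lemma outright and the entire bootstrap scaffolding becomes unnecessary.
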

\begin{proof}
It follows from the explicit formula for $V^c$ that
\begin{equation*}
V^{c}_1=\frac{2}{c|x|}\left(\frac{1}{\big(1-c\frac{x_1}{|x|}\big)^2}-\frac{\frac{x_1}{|x|}}{\big(1-c\frac{x_1}{|x|}\big)^2}
+\frac{\frac{x_1^2}{|x|^2}}{\big(1-c\frac{x_1}{|x|}\big)^2}\right),
\end{equation*}
where the functions $\frac{1}{(1-c\frac{x_1}{|x|})^2}$, $\frac{{x_1}/{|x|}}{(1-c\frac{x_1}{|x|})^2}$, $\frac{{x_1^2}/{|x|^2}}{(1-c\frac{x_1}{|x|})^2}$ and their derivatives are bounded on $\RR^3$ uniformly in $|c|\geq2$. A similar reasoning should be applied in the case of $V^c_2(x)$ and $V^c_3(x)$. Thus, by direct calculations, for every multiindex $\alpha=(\alpha_1,\alpha_2,\alpha_3)$ and $D^\alpha=\partial_{x_1}^{\alpha_1}\partial_{x_2}^{\alpha_2}\partial_{x_3}^{\alpha_3}$,
 we have
\begin{equation}\label{eq.Landanu-der}
 |D^\alpha V^{c}(x)|\leq\frac{ C(\alpha)}{|c|} \cdot\frac{1}{|x|^{1+|\alpha|}}\quad\text{for all}\quad x\in\RR^3\backslash\{0\},
 \end{equation}
where constants $C(\alpha)$ are independent of $c\in\RR$ such that $|c|\geq2$.

Next, using the Littlewood--Paley theory, one can write the following decomposition for all $\xi\neq0$
 \begin{equation*}
\widehat{V}^c(\xi)=\sum_{q\leq\frac{1}{|\xi|}}\dot{\Delta}_q\widehat{V}^c(\xi)+\sum_{q>\frac{1}{|\xi|}}\dot{\Delta}_q\widehat{V}^c(\xi)=I+II,
 \end{equation*}
 where  for  each $q\in\mathbb{Z}$ the symbol $\dot{\Delta}_q=\varphi(2^{-q}D)$ denotes the  {\it homogeneous Littlewood--Paley operator} with $\varphi\in C_{\rm c}^\infty(\RR^3)$  supported in the annulus $\big\{x\in\mathbb{R}^{3}\,:\,\frac{3}{4}\leq|x|\leq\frac{8}{3}\big\}$ and satisfying
$
\sum_{q\in \mathbb{Z}}\varphi(2^{-q}x)=1
$
for  each
$
x\in \mathbb{R}^{3}\backslash\{0\}.
$
We refer the reader to \cite{Cannone} and to references therein
for more results on
 the Littlewood--Paley decomposition. Now, we deal with the terms $I$ and $II$, separately.

In the case where $q\leq\frac{1}{|\xi|}$, by the Hausdorff--Young inequality, the support property of $\varphi$, and by estimate \eqref{eq.Landanu-der} with $\alpha=(0,0,0)$, term $I$ can be bounded as follows
 \begin{align*}
 I\leq C \sum_{q\leq\frac{1}{|\xi|}}\int_{\RR^3}\varphi(2^{-q}x)|V^c(x)|\,\mathrm{d}x
 \leq&\frac{C(\alpha)}{|c|}2^{-q}\int_{\RR^3}\varphi(2^{-q}x)\,\mathrm{d}x\\
 \leq&\frac{C}{|c|}\sum_{q\leq\frac{1}{|\xi|}}2^{2q}\|\varphi\|_{L^1(\RR^3)}
 \leq \frac{C}{|c|} \frac{1}{|\xi|^2}.
 \end{align*}

Next, for $q>\frac{1}{|\xi|}$, the term $II$ can be written as $|\xi|^{-|\alpha|}\sum_{q>\frac{1}{|\xi|}}|\xi|^{|\alpha|}\dot{\Delta}_q\widehat{V}^c(\xi)$  for each $|\alpha|\geq0$.
Moreover, by the Hausdorff--Young inequality and the Leibniz formula, we easily find that
\begin{equation}\label{eq.lsmall-1}
\begin{split}
|\xi|^{|\alpha|}\dot{\Delta}_q\widehat{V}^c(\xi)\leq&\int_{\RR^3}\big|D^\alpha\big(\varphi(2^{-q}x)V^c(x)\big)\big|\,\mathrm{d}x\\
\leq&C\sum_{|\alpha_1|+|\alpha_2|=|\alpha|}\int_{\RR^3}\big|D^{\alpha_1}\big(\varphi(2^{-q}x)\big)\big(D^{\alpha_2}V^c(x)\big)\big|\,\mathrm{d}x.
\end{split}
\end{equation}
Now, we fix $\alpha$ such that $|\alpha|=3$. Inequality \eqref{eq.lsmall-1} together with the support property of $\varphi$ and estimate \eqref{eq.Landanu-der} allows us to conclude that
 \begin{align*}
II\leq&\frac{C}{|c|} |\xi|^{-3}\sum_{q>\frac{1}{|\xi|}}\sum_{|\alpha_1|+|\alpha_2|=|\alpha|}2^{-4q}\int_{\RR^3}\big|(D^{\alpha_1}\varphi)(2^{-q} x)\big|\,\mathrm{d}x\nonumber
\\\leq&\frac{C}{|c|} |\xi|^{-3}\sum_{q>\frac{1}{|\xi|}}2^{-q}\leq \frac{C}{|c|} \frac{1}{|\xi|^2}.
\end{align*}
Both estimates of $I$ and $II$ yields the required inequality \eqref{eq.Landau-small}.
\end{proof}

%%%%%%%%%%%%%%%%%%%%%%%%%%%%%%%%%%%%%%%%%%%%%%%%%%%%%%%%%%%%%%%%%%%%%%%%%%%%%%%%%%%%%%%%%%%%%%%%%%%
\section{Singular solutions to the heat equation}\label{HEAT}
%%%%%%%%%%%%%%%%%%%%%%%%%%%%%%%%%%%%%%%%%%%%%%%%%%%%%%%%%%%%%%%%%%%%%%%%%%%%%%%%%%%%%%%%%%%%%%%%%%%%
\setcounter{section}{4}\setcounter{equation}{0}

 In order to illustrate our method of constructing singular solutions to the Navier-Stokes system \eqref{eq.NS}, we apply it first to the linear heat equation
 \begin{equation}\label{eq.heat}
 \partial_tu-\Delta u=0,\quad (x,t)\in\RR^3\times\RR^+.
 \end{equation}
 We are going to construct a pointwise solution to \eqref{eq.heat} which is singular on an arbitrary H\"older continuous curve $\Gamma=\{(\gamma(t),t):\, t>0 \,\,\text{and}\,\,\gamma(t):\RR^+\to\RR^3\}$.

 First, we recall that the function
$
U(x)=\frac{1}{4\pi|x|} $
satisfies the Poisson equation
\begin{equation*}
-\Delta U(x)=0 \quad \text{for all}\quad x\in \RR^3\backslash\{0\}.
\end{equation*}
Thus, $U=U(x)$ is a one point singular stationary solution of the heat equation \eqref{eq.heat} with a singularity on the line
$\Gamma_0=\{(0,t)\in\RR^3\times\RR^+:\,t\geq0\}.$
However, as a distributional solution, we have
\begin{equation*}
-\int_{\RR^3}u(x)\Delta \varphi(x)\,\mathrm{d}x=\varphi(0) \quad\text{for all}\quad\varphi\in C^\infty_{\rm c}(\RR^3).
\end{equation*}
Hence, $-\Delta U=\delta_0$ in $\RR^3$, where $\delta_0$ denotes the Dirac measure.
Following this idea, we construct a solution to the inhomogeneous heat equation
\begin{equation}\label{eq.SHEAT}
\partial_tu-\Delta u=\delta_{\gamma(t)},\quad(x,t)\in\RR^3\times\RR^+,
\end{equation}
with the singular force $\delta_{\gamma(t)}$ for every $t>0$.
\begin{theorem}\label{THM.HEAT}
Let $\Gamma=\big\{(\gamma(t),t),\,t>0\big\}\subset \RR^3\times\RR^+$ be a curve, where $\gamma=\gamma(t)$ is H\"older continuous of exponent $\alpha\in(\frac12,1]$. There exists a function $u(x,t)$ such that \begin{enumerate}[\rm(i)]
                                                               \item\label{eq.item-1} $u\in C^\infty\big((\RR^3\times\RR^+)\backslash\Gamma\big)$;
                                                               \item \label{eq.item-2} $\partial_tu=\Delta u$, for all $(x,t)\in(\RR^3\times\RR^+)\backslash\Gamma$;
                                                               \item \label{eq.item-3} we have the decomposition
                                                            $u(x,t)=\omega_0(x,t)+\frac{1}{4\pi|x-\gamma(t)|} $, where the function $\omega_0$  satisfies $\|\omega_0(\cdot,t)\|_{L^q(\RR^3)}
                                                            \leq Ct^{\frac12\left(\frac3q-1\right)}$ for every $q\in(3,\frac{3}{2-2\alpha})$ and $t\in(0,T]$ with a constant $C=C(q,\alpha,T)>0$.
                                                             \end{enumerate}

\end{theorem}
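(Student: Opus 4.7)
The plan is to take the explicit Duhamel solution of \eqref{eq.SHEAT} and peel off the claimed stationary-like singular part by means of the elementary identity
\begin{equation*}
\int_0^\infty G(x,s)\,\mathrm{d}s=\frac{1}{4\pi|x|},
\end{equation*}
verified by the substitution $s=|x|^2/(4\sigma)$ and $\Gamma(1/2)=\sqrt{\pi}$. Setting
\begin{equation*}
u(x,t):=\int_0^t G\bigl(x-\gamma(\tau),t-\tau\bigr)\,\mathrm{d}\tau=\int_0^t G\bigl(x-\gamma(t-s),s\bigr)\,\mathrm{d}s
\end{equation*}
gives a distributional solution of \eqref{eq.SHEAT} with zero initial datum, and subtracting the identity above (translated to $\gamma(t)$) yields the decomposition $\omega_0(x,t)=I(x,t)+II(x,t)$ with
\begin{equation*}
I(x,t)=\int_0^t\bigl[G(x-\gamma(t-s),s)-G(x-\gamma(t),s)\bigr]\,\mathrm{d}s,\qquad II(x,t)=-\int_t^\infty G(x-\gamma(t),s)\,\mathrm{d}s.
\end{equation*}

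For the $L^q$ bound in (iii), I would use Minkowski's integral inequality together with the standard scaling $\|G(\cdot,s)\|_{L^q}=Cs^{-\frac{3}{2}(1-1/q)}$ and $\|\nabla G(\cdot,s)\|_{L^q}=Cs^{-\frac{3}{2}(1-1/q)-1/2}$. The tail piece gives $\|II(\cdot,t)\|_{L^q}\leq C\int_t^\infty s^{-\frac{3}{2}(1-1/q)}\,\mathrm{d}s=Ct^{\frac{1}{2}(3/q-1)}$, convergent at infinity precisely when $q>3$. For $I$, Hölder continuity yields $|\gamma(t)-\gamma(t-s)|\leq Cs^\alpha$ on $[0,T]$, and the fundamental theorem of calculus combined with translation invariance gives
\begin{equation*}
\|G(\cdot-\gamma(t-s),s)-G(\cdot-\gamma(t),s)\|_{L^q}\leq Cs^\alpha\|\nabla G(\cdot,s)\|_{L^q}\leq Cs^{\alpha-2+\frac{3}{2q}},
\end{equation*}
and integration in $s$ produces $\|I(\cdot,t)\|_{L^q}\leq Ct^{\alpha-1+\frac{3}{2q}}$, with convergence at zero demanding exactly $q<\frac{3}{2-2\alpha}$. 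Since $\alpha>\frac12$, on $(0,T]$ this is dominated by $C(T)t^{\frac{1}{2}(3/q-1)}$ via $t^{\alpha-1+\frac{3}{2q}}=t^{\alpha-1/2}\,t^{\frac{1}{2}(3/q-1)}$. The hypothesis $\alpha>\frac12$ is used precisely to make the admissible range $(3,\tfrac{3}{2-2\alpha})$ nonempty.

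Items (i) and (ii) should follow by a localization. Fix $(x_0,t_0)\notin\Gamma$ and set $d:=|x_0-\gamma(t_0)|/2>0$. Continuity of $\gamma$ provides a neighborhood $V$ of $(x_0,t_0)$ and $\delta>0$ with $|x-\gamma(\tau)|\geq d$ for all $(x,t)\in V$ and $\tau\in[\max(0,t-\delta),t]$. On that $\tau$-range the integrand and all its $(x,t)$-derivatives decay like $C_k(t-\tau)^{-3/2-k}\exp(-cd^2/(t-\tau))$, integrable in $\tau$ and uniformly bounded in $V$; on the complementary range $\tau\in[0,t-\delta]$ the integrand is smooth in $(x,t)$ with integrable derivatives. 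A dominated convergence argument then allows differentiation under the integral sign and, using $(\partial_t-\Delta_x)G(x-\gamma(\tau),t-\tau)=0$ for $\tau<t$, yields $\partial_tu=\Delta u$ pointwise on the complement of $\Gamma$.

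The only delicate point is handling the boundary $\tau\to t^-$ when differentiating the Duhamel integral: the formal boundary contribution $\lim_{\tau\to t^-}G(x-\gamma(\tau),t-\tau)$ must vanish off $\Gamma$. The Gaussian factor $\exp(-|x-\gamma(\tau)|^2/(4(t-\tau)))$ decisively handles this once the local lower bound $|x-\gamma(\tau)|\geq d$ is in force, so I do not anticipate a serious obstruction here. The real technical bookkeeping is hidden in the sharp endpoint $q=\tfrac{3}{2-2\alpha}$ of part (iii), where the competition between the Hölder exponent $\alpha$ and the gradient estimate of the heat kernel dictates the upper bound; matching this endpoint is the step where the assumption $\alpha>\frac12$ cannot be relaxed.
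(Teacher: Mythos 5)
Your argument is correct, and while it realizes the same underlying object and the same two-term splitting as the paper, the technical route is genuinely different. Your $u$ is exactly the inverse Fourier transform of the paper's formula \eqref{eq.Fu}, and your decomposition $\omega_0=I+II$ is precisely the physical-space counterpart of the paper's splitting \eqref{eq.omega}: the tail $II=-\int_t^\infty G(\cdot-\gamma(t),s)\,\mathrm{d}s$ has Fourier transform $-e^{-t|\xi|^2}e^{i\gamma(t)\cdot\xi}|\xi|^{-2}$, and $I$ corresponds to the term with the difference $e^{i\gamma(\tau)\cdot\xi}-e^{i\gamma(t)\cdot\xi}$. The divergence is in how the estimates are run. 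The paper works entirely on the Fourier side: it bounds $|\xi|^a|\widehat{\omega_0}(\xi,t)|$ for $a\in[2,1+2\alpha]$ (using $|e^{i\gamma(\tau)\cdot\xi}-e^{i\gamma(t)\cdot\xi}|\leq C|\xi|\,|t-\tau|^\alpha$) and then passes to $L^q$ via the Hausdorff--Young interpolation of Lemma \ref{lem:interpol}; you instead use Minkowski's integral inequality with the $L^q$ scaling of $G$ and $\nabla G$. Both yield the same admissible range $q\in(3,\frac{3}{2-2\alpha})$ and the same rate $t^{\frac12(3/q-1)}$ after absorbing your extra factor $t^{\alpha-1/2}$ on $(0,T]$. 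For smoothness, the paper shows $(u)$ is a distributional solution off $\Gamma$ and invokes the Weyl/hypoellipticity theorem of \cite{RS75}, whereas you differentiate under the integral sign directly using the Gaussian decay on the set $|x-\gamma(\tau)|\geq d$; your route is more elementary and self-contained here, and your treatment of the boundary term at $\tau\to t^-$ is adequate. The trade-off is that the paper's $\mathcal{PM}^a$ formulation is chosen precisely so that the identical estimates transfer to the Navier--Stokes bilinear term in Section \ref{PROF}, which your kernel-based argument would not do without substantial reworking.
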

\begin{remark}
This theorem has been proved recently by Takahashi and Yanagida  \cite[Theorem 1.5]{TY13}. Below, we propose a different proof of Theorem \ref{THM.HEAT}, which essentially 
uses properties of the Fourier transform.

First, note  that computing formally the Fourier transform with respect to $x$ of equation~\eqref{eq.SHEAT},
we obtain the differential equation
\begin{equation}\label{eq.FHS}
\partial_t\widehat{u}(\xi,t)+|\xi|^2\widehat{u}(\xi,t)=e^{i\gamma(t)\cdot\xi} \quad\text{for all}\quad (\xi,t)\in\RR^3\times\RR^+.
\end{equation}
Supplementing this equation with the initial condition $\widehat{u}(\xi,0)=0$ for all $\xi\in\RR^3$, we obtain the following formula for its solution
\begin{equation}\label{eq.Fu}
\widehat{u}(\xi,t)=\int_0^te^{-(t-\tau)|\xi|^2}e^{i\gamma(\tau)\xi}\,\mathrm{d}\tau \quad\text{for all}\quad (\xi,t)\in\RR^3\times\RR^+.
\end{equation}
\end{remark}

\begin{proof}[Proof of Theorem \ref{THM.HEAT}]

The proof consists in showing that the tempered  distribution $u(t)$ defined in the Fourier variables by formula \eqref{eq.Fu} is, in fact, a function $u=u(x,t)$ with the properties stated in  \eqref{eq.item-1}-\eqref{eq.item-3} of Theorem \ref{THM.HEAT}.

\indent{\it Step 1.} First, let us show that $u\in C_{\rm w}\big([0,\infty);\mathcal{PM}^2\big)$; see the definition of this space in Section \ref{PRE}.
Indeed, we notice that
\begin{equation}\label{eq.u:est2}
\big|\widehat{u}(\xi,t)\big|\leq\int_0^te^{-(t-\tau)|\xi|^2}\,\mathrm{d}\tau=\frac{1}{|\xi|^2}\left(1-e^{-t|\xi|^2}\right).
\end{equation}
Hence, by the definition of the norm in $\mathcal{PM}^2$, we have
\begin{equation*}
\sup_{t\geq0}\|u(t)\|_{\mathcal{PM}^2}\leq \sup_{t>0}\sup_{\xi\in\RR^3}\left(1-e^{-t|\xi|^2}\right)\leq 1.
\end{equation*}
Now, for every test function $\varphi\in \mathcal{S}(\RR^3)$, by the definition of the Fourier transform of a tempered distribution, we have
\begin{equation}\label{eq.*}
\langle u(t),\varphi\rangle=\int_{\RR^3}\widehat{u}(\xi,t)\widehat{\varphi}(\xi)\,\mathrm{d}\xi.
\end{equation}
By formula \eqref{eq.Fu}, $\widehat{u}(\xi,t)$ is a continuous function of $t\geq0$ for each fixed $\xi\in\RR^3$. Moreover, by \eqref{eq.u:est2}, we have the inequality
$
\big|\widehat{u}(\xi,t)\widehat{\varphi}(\xi)\big|\leq\frac{1}{|\xi|^2}\big|\widehat{\varphi}(\xi)\big|,
$
where the right hand side is integrable over $\RR^3$. Thus, the continuity of the right-hand side of identity \eqref{eq.*} with respect to $t\geq0$ is an immediate consequence of the Lebesgue dominated convergence theorem.

\indent{\it Step 2.} We recall that $U(x)=\frac{1}{4\pi|x|}$ satisfies $\widehat{U}(\xi)=\frac{1}{|\xi|^2}$ for all $\xi\in\RR^3\backslash\{0\}$. Hence, for $U_{\gamma}(x,t)\equiv U(x-\gamma(t))$, we have
\begin{equation}\label{eq.FUG}
\widehat{U_\gamma}(\xi,t)=\frac{1}{|\xi|^2}e^{i\gamma(t)\cdot\xi}=\widehat{U}(\xi)e^{i\gamma(t)\cdot\xi}.
\end{equation}
We define the function
\begin{equation*}
\widehat{\omega_0}(\xi,t)=\widehat{u}(\xi,t)-\widehat{U_\gamma}(\xi,t),
\end{equation*}
which by equations \eqref{eq.Fu} and \eqref{eq.FUG} satisfies
\begin{equation}\label{eq.omega}
\begin{split}
\widehat{\omega_0}(\xi,t)=&\int_0^te^{-(t-\tau)|\xi|^2}e^{i\gamma(\tau)\cdot\xi}\;\mathrm{d}\tau-e^{i\gamma(t)\xi}\frac{1}{|\xi|^2}\\
=&-e^{-t|\xi|^2}e^{i\gamma(t)\xi}\frac{1}{|\xi|^2}+\int_0^te^{-(t-\tau)|\xi|^2}\left(e^{i\gamma(\tau)\cdot\xi}-e^{i\gamma(t)\cdot\xi}\right)\,\mathrm{d}\tau,
\end{split}
\end{equation}
because $\int_0^te^{-(t-\tau)|\xi|^2}\,\mathrm{d}\tau=|\xi|^{-2}\big(1-e^{-t|\xi|^2}\big)$.

Notice now that by \eqref{eq.u:est2} and \eqref{eq.FUG}, we have
\begin{equation}\label{eq.om:2}
\big|\widehat{\omega_0}(\xi,t)\big|\leq \frac{2}{|\xi|^2}\quad\text{for all}\quad \xi\in\RR^3\backslash\{0\},
\end{equation}
which implies that $\sup_{t>0}\|\omega_0(t)\|_{\mathcal{PM}^2}\leq2$. Moreover, repeating the argument from Step~1, we may show that $\omega_0\in C_{\rm w}\big([0,\infty);\mathcal{PM}^2\big)$.

\indent{\it Step 3.}
Now, we prove that
$\omega_0\in \mathcal{Y}^a_T$ for each $a\in [2,1+2\alpha]$ and $T>0$, where the Banach space $\mathcal{Y}_{T}^a$ is defined in \eqref{Ya}.
By Step 2, we have $\omega_0\in C_{\rm w}\big([0,\infty);\mathcal{PM}^2\big)$. Hence, it remains to estimate $\omega_0(t)$ in the $\mathcal{PM}^a$-norm, and we use here the representation of $\omega_0$ by the right-hand side of equation \eqref{eq.omega}.

First, we notice that for each $a\geq2$ there exists a positive constant $C=C(a)$ such that
\begin{equation*}
\text{ess}\sup_{\xi\in\RR^3}\left||\xi|^ae^{-t|\xi|^2}e^{i\gamma(t)\cdot\xi}\frac{1}{|\xi|^2}\right|=\sup_{\xi\in\RR^3}|\xi|^{a-2}e^{-t|\xi|^2}\leq Ct^{-\frac{a-2}{2}}\quad\text{for all}\quad t>0.
\end{equation*}

To deal with the second term on the right-hand side of \eqref{eq.omega}, we fix $T>0$ and consider $s,t\in[0,T]$.
Thus, by the H\"older continuity of $\gamma(t)$, there is a constant $C=C(T)>0$ such that $\big|\gamma(t)-\gamma(s)\big|\leq C(T)|t-s|^\alpha$. Hence
\begin{equation}\label{eq.om:a}
\begin{split}
\left|\int_0^te^{-(t-s)|\xi|^2}\left(e^{i\gamma(s)\cdot\xi}-e^{i\gamma(t)\cdot\xi}\right)\;\mathrm{d}s\right|
\leq&C(T)\int_0^te^{-(t-s)|\xi|^2}|\xi||t-s|^\alpha\;\mathrm{d}s\\
\leq&C(T)\frac{1}{|\xi|^{1+2\alpha}}\int_0^\infty e^{-s}s^\alpha\;\mathrm{d}s.
\end{split}
\end{equation}
Consequently, using estimate \eqref{eq.om:2} for $|\xi|\leq1$ and estimate \eqref{eq.om:a} for $|\xi|\geq1$, we obtain for each $t\in[0,T]$
\begin{equation*}
\begin{split}
\text{ess}\sup_{\xi\in\RR^3}|\xi|^a\big|\widehat{\omega_0}(t,\xi)\big|\leq&
\text{ess}\sup_{|\xi|\leq1}|\xi|^a\big|\widehat{\omega_0}(t,\xi)\big|+\text{ess}\sup_{|\xi|\geq1}|\xi|^a\big|\widehat{\omega_0}(t,\xi)\big|\\
\leq& 2\sup_{|\xi|\leq1}|\xi|^{a-2}+C(T)\sup_{|\xi|\geq1}|\xi|^{a-1-2\alpha}.
\end{split}
\end{equation*}
Here, the right-hand side is finite for every $a\in[2,1+2\alpha]$, and this interval is non-empty if $\alpha>\frac12$. Thus, we have proved that $\omega_0\in \mathcal{Y}_T^a$ for each $a\in[2,1+2\alpha]$ and $T>0$.

\indent {\it Step 4.}
By Lemma \ref{lem:interpol},  we have
\begin{equation}\label{eq.wlq}
\|\omega_0(t)\|_{L^q}\leq C\|\omega_0(t)\|_{\mathcal{PM}^2}^{1-\beta}\|\omega_0(t)\|_{\mathcal{PM}^a}^{\beta},
\end{equation}
for each $q\in(3,\frac{3}{3-a})$ and $\beta=\frac{1}{a-2}(1-\frac3q)$.
Since $\omega_0\in \mathcal{Y}_T^a$ with arbitrary $a\in[2,1+2\alpha]$, we obtain from inequality \eqref{eq.wlq}, the following estimate $ \|\omega_0(\cdot,t)\|_{L^q(\RR^3)}\leq Ct^{\frac12\left(\frac3q-1\right)}$  for every $q\in(3,\frac{3}{2-2\alpha})$, a constant $C=C(q,\alpha,T)>0$ and all $t\in(0,T]$.

\textit{Step 5.} It remains for us to prove that $u=u(x,t)$ is a $C^\infty$-solution of the heat equation~\eqref{eq.heat} on $(\RR^3\times\RR^+)\backslash\Gamma$.
Let $\varphi\in C^\infty_{\rm c}\big((\RR^3\times\RR^+)\backslash\Gamma\big)$ be an arbitrary test function. We multiply equations \eqref{eq.FHS} by $\widehat{\varphi}(\xi,t)$ and integrate on $\RR^3\times\RR^+$  to obtain (after integration by parts with respect to $t$) the equation
\begin{equation*}
-\int_0^\infty\int_{\RR^3}\widehat{u}(\xi,t)\widehat{\varphi}_t(\xi,t)\;\mathrm{d}\xi\mathrm{d}t=
-\int_0^\infty\int_{\RR^3}\widehat{u}(\xi)|\xi|^2\widehat{\varphi}(\xi,t)\;\mathrm{d}\xi\mathrm{d}t+\int_0^\infty\int_{\RR^3}e^{i\gamma(t)\cdot\xi}\widehat{\varphi}(\xi,t)\;\mathrm{d}\xi\mathrm{d}t.
\end{equation*}
Since $\mathrm{supp}\,\varphi\subset(\RR^3\times\RR^+)\backslash\Gamma$, by properties of the Fourier transform, we obtain
\begin{equation}\label{eq.11a}
\int_0^\infty\int_{\RR^3}e^{i\gamma(t)\cdot\xi}\widehat{\varphi}(\xi,t)\;\mathrm{d}\xi\mathrm{d}t=\int_0^\infty \varphi(\xi(t),t)\,\mathrm{d}t=0.
\end{equation}
Hence, by the Plancherel formula, we have
\begin{equation}\label{eq.p17}
-\int_0^\infty\int_{\RR^3}u(x,t)\varphi_t(x,t)\;\mathrm{d}x\mathrm{d}t=-\int_0^\infty\int_{\RR^3}u(x,t)\Delta\varphi(x,t)\;\mathrm{d}x\mathrm{d}t,
\end{equation}
so, the function $u=u(x,t)$ is a distributional solution of heat equation \eqref{eq.heat} in $(\RR^3\times\RR^+)\backslash\Gamma$. It follows from the formula $u(x,t)=\omega_0(x,t)+(4\pi|x-\gamma(t)|)^{-1}$ and from \eqref{eq.wlq} that $u\in L^1_{\rm loc}(\RR^3\times(0,\infty))$, hence, the function
$u=u(x,t)$ is smooth on $(\RR^3\times\RR^+)\backslash\Gamma$ by the Weyl theorem in \cite{RS75}.
\end{proof}
%%%%%%%%%%%%%%%%%%%%%%%%%%%%%%%%%%%%%%%%%%%%%%%%%%%%%%%%%%%%%%%%%%%%%%%%%%%%%%%%%%%%%%%%%%%%%%%%%%%
\section{Singular solutions to the Navier-Stokes system}\label{PROF}
%%%%%%%%%%%%%%%%%%%%%%%%%%%%%%%%%%%%%%%%%%%%%%%%%%%%%%%%%%%%%%%%%%%%%%%%%%%%%%%%%%%%%%%%%%%%%%%%%%%%
\setcounter{section}{5}\setcounter{equation}{0}
In this section, we construct a vector field $u=(u_1,u_2,u_3)$ and a pressure $p=p(x,t)$ with properties stated in Theorem \ref{THM.SNS} as a solution to the singular initial value problem \eqref{eq.SNS}. First, we recall from \cite{CK04} a result on the existence of  a family of tempered distributions $u=u(t)$ which satisfies problem \eqref{eq.SNS}.

\begin{theorem}\label{THM.1}
Let $\gamma(t):[0,\infty)\to \RR^3$ be arbitrary. Assume that $|\kappa|<\frac{1}{8\eta_2}$ where $\eta_2$ is constant from inequality \eqref{eq.Ya-Est} with $a=2$. Then, the singular initial value problem \eqref{eq.SNS} has a solution $u\in \mathcal{Y}_\infty^2=C_{\rm w}\big([0,\infty);\mathcal{PM}^2\big)$ in the sense of Definition \ref{def1}. This is a unique solution satisfying $|||u|||_{2,\infty}\leq4|\kappa|$.
\end{theorem}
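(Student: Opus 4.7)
The plan is a direct application of the abstract contraction Lemma \ref{lem:xyB} to the Fourier-side integral equation \eqref{FDuh} inside the Banach space $X=\mathcal{Y}^{2}_{\infty}=C_{\rm w}\big([0,\infty);\mathcal{PM}^{2}\big)$ equipped with the norm $\I\cdot\I_{2,\infty}$. I would take $L\equiv 0$ (so $\lambda=0$), let $B$ be the bilinear form defined by \eqref{Bf}, and set the source equal to the second term on the right of \eqref{FDuh}, namely
\[
\widehat{y}(\xi,t)=\kappa\int_{0}^{t}e^{-(t-\tau)|\xi|^{2}}\widehat{\mathbb{P}}(\xi)e^{i\gamma(\tau)\cdot\xi}\bar{e}_{1}\,\mathrm{d}\tau,
\]
so that \eqref{FDuh} is precisely the abstract equation $u=y+B(u,u)$.

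The first step is to bound the source. Using $|\widehat{\mathbb{P}}(\xi)|\le 2$ and $|e^{i\gamma(\tau)\cdot\xi}|=1$ yields
\[
|\xi|^{2}|\widehat{y}(\xi,t)|\le 2|\kappa|\,|\xi|^{2}\int_{0}^{t}e^{-(t-\tau)|\xi|^{2}}\,\mathrm{d}\tau = 2|\kappa|\bigl(1-e^{-t|\xi|^{2}}\bigr)\le 2|\kappa|,
\]
so $\sup_{t\ge 0}\|y(t)\|_{\mathcal{PM}^{2}}\le 2|\kappa|$. For weak continuity in $t$, I would observe that for every $\varphi\in\mathcal{S}(\RR^{3})$ the pairing $\langle y(t),\varphi\rangle=\int_{\RR^{3}}\widehat{y}(\xi,t)\widehat{\varphi}(\xi)\,\mathrm{d}\xi$ has a $t$-continuous integrand and an integrable majorant $2|\kappa||\widehat{\varphi}(\xi)|/|\xi|^{2}$, exactly as in Step 1 of the proof of Theorem \ref{THM.HEAT}. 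Hence $y\in X$ with $\I y\I_{2,\infty}\le 2|\kappa|$.

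The bilinear estimate $\I B(u,v)\I_{2,\infty}\le \eta_{2}\,\I u\I_{2,\infty}\I v\I_{2,\infty}$ is Lemma \ref{lem.bil} applied with $a=2$ and $T=\infty$; the weak continuity in $t$ of $B(u,v)$ follows from the same dominated-convergence test against $\varphi\in\mathcal{S}(\RR^{3})$, the $\xi$-majorant being $\eta_{2}\I u\I_{2,\infty}\I v\I_{2,\infty}|\widehat{\varphi}(\xi)|/|\xi|^{2}$, read off from the pointwise version of \eqref{eq.Ya-Est}. Lemma \ref{lem:xyB} now applies with $\lambda=0$, $\eta=\eta_{2}$, and $\I y\I_{2,\infty}\le 2|\kappa|$: the smallness hypothesis $4\eta\|y\|<(1-\lambda)^{2}$ reduces to $|\kappa|<1/(8\eta_{2})$, precisely the assumption of the theorem. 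It produces a solution $u\in X$ of \eqref{FDuh} with $\I u\I_{2,\infty}\le 2\I y\I_{2,\infty}/(1-\lambda)\le 4|\kappa|$, unique in the open ball of radius $1/(2\eta_{2})$. Since $4|\kappa|<1/(2\eta_{2})$ under the same hypothesis, this ball contains the constructed solution and yields the asserted uniqueness in the smaller ball $\{\I v\I_{2,\infty}\le 4|\kappa|\}$.

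No single estimate here is deep; each is either a direct Fourier computation for $y$ or is quoted verbatim from Lemma \ref{lem.bil}. The only bookkeeping point that requires care is keeping both $y$ and the Picard iterates inside $C_{\rm w}\big([0,\infty);\mathcal{PM}^{2}\big)$ rather than merely in $L^{\infty}_{t}\mathcal{PM}^{2}$. This is handled uniformly by the dominated-convergence test against Schwartz functions described above, together with the fact that $C_{\rm w}$ is closed under uniform convergence in $\I\cdot\I_{2,\infty}$: this last closure follows from the pairing bound $|\langle v,\varphi\rangle|\le C_{\varphi}\|v\|_{\mathcal{PM}^{2}}$ valid for every $\varphi\in\mathcal{S}(\RR^{3})$, itself a consequence of the local integrability of $|\xi|^{-2}$ in $\RR^{3}$.
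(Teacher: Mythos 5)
Your proposal is correct and follows essentially the same route as the paper: both write \eqref{FDuh} as $u=y+B(u,u)$, bound $\I y\I_{2,\infty}\le 2|\kappa|$ using $|\widehat{\mathbb{P}}(\xi)|\le 2$, invoke Lemma \ref{lem.bil} with $a=2$, $T=\infty$ for the bilinear estimate, and apply Lemma \ref{lem:xyB} with $L=0$ so that $4\eta_2\cdot 2|\kappa|<1$ reduces to the hypothesis $|\kappa|<\frac{1}{8\eta_2}$. The extra care you take with weak continuity of the iterates is exactly the content the paper delegates to Step 1 of the proof of Theorem \ref{THM.HEAT}.
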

\begin{proof}
This theorem is a particular case of \cite[Theorem 4.1]{CK04}, however, we  sketch its proof for the completeness of the exposition. We write equation \eqref{FDuh} in the following form
\begin{equation}\label{eq.uBy}
u=B(u,u)+y,
\end{equation}
where (see Definition \ref{def1}) the bilinear form $B(\cdot,\cdot)$ is defined by its Fourier transform
\begin{equation}\label{eq.Bform}
\widehat{B(u,v)}(\xi,t)=\int_0^te^{-(t-\tau)|\xi|^2}i\xi\widehat{\mathbb{P}}(\xi)\widehat{(u\otimes v)}(\xi,\tau)\;\mathrm{d}\tau
\end{equation}
and
$$\widehat{y}=\kappa\int_0^te^{-(t-\tau)|\xi|^2}\widehat{\mathbb{P}}(\xi)e^{i\gamma(\tau)\cdot\xi}\;\mathrm{d}\tau.$$
Since $\sup_{\xi\in\RR^3\backslash\{0\}}\big|\widehat{\mathbb{P}(\xi)}\big|\leq2$,
following the arguments of Step 1 in the proof of Theorem~\ref{THM.HEAT}, we obtain immediately that $y\in C_{\rm w}\big([0,\infty),\mathcal{PM}^2\big)$ and
$|||y|||_{2,\infty}\leq2|\kappa|$.
Next, estimate \eqref{eq.Ya-Est} with $a=2$ and $T=\infty$ guarantees that the bilinear form $B(\cdot,\cdot)$ satisfies the inequality
\begin{equation*}
|||B(u,v)|||_{2,\infty}\leq \eta_2|||u|||_{2,\infty}|||v|||_{2,\infty}
\end{equation*}
for all $u,\,v\in \mathcal{Y}_\infty^2$. Hence, by Lemma \ref{lem:xyB} with $L=0$ and $\lambda=0$, equations \eqref{eq.uBy} have a unique solution satisfying $|||y|||_{2,\infty}\leq4|\kappa|$.
\end{proof}
\begin{corollary}\label{coro.5.2}
The time dependent family of tempered distributions $u(t)$ from Theorem~\ref{THM.1} is represented by a locally integrable function $u\in C_{\rm w}([0,\infty),L^{3,\infty}(\RR^3))$.
\end{corollary}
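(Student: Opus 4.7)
The plan is to derive Corollary~\ref{coro.5.2} as a direct consequence of Theorem~\ref{THM.1} combined with the embedding recorded in Lemma~\ref{lem.weakl-3}. Theorem~\ref{THM.1} supplies a family of tempered distributions $u \in C_{\rm w}([0,\infty);\mathcal{PM}^2)$ satisfying $|||u|||_{2,\infty}=\sup_{t\geq 0}\|u(t)\|_{\mathcal{PM}^2}\leq 4|\kappa|$. Applying Lemma~\ref{lem.weakl-3} pointwise in $t$ gives
\begin{equation*}
\|u(t)\|_{L^{3,\infty}(\RR^3)}\leq C\|u(t)\|_{\mathcal{PM}^2}\leq 4C|\kappa|
\quad\text{for every}\quad t\geq 0,
\end{equation*}
so each tempered distribution $u(t)$ is represented by a function in $L^{3,\infty}(\RR^3)$ uniformly in $t$. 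Local integrability in the $x$-variable follows at once because $L^{3,\infty}(\RR^3)\hookrightarrow L^1_{\rm loc}(\RR^3)$ via H\"older's inequality in Lorentz spaces on any bounded ball.

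It remains to check the weak continuity $u\in C_{\rm w}([0,\infty);L^{3,\infty}(\RR^3))$, which in the convention fixed in Section~\ref{PRE} means boundedness in the norm together with continuity of $t\mapsto \langle u(t),\varphi\rangle$ for every $\varphi\in\mathcal{S}(\RR^3)$. This is inherited for free from the conclusion $u\in C_{\rm w}([0,\infty);\mathcal{PM}^2)$ of Theorem~\ref{THM.1}: the class of test functions is the same, and the two pairings agree on $\mathcal{S}(\RR^3)$ because Lemma~\ref{lem.weakl-3} identifies the distribution $u(t)$ with its weak-$L^3$ representative precisely through the standard integral pairing with Schwartz functions.

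There is no real obstacle; Corollary~\ref{coro.5.2} is a packaging statement that reads off a regularity conclusion from the uniform $\mathcal{PM}^2$-bound produced by the fixed-point argument of Theorem~\ref{THM.1}. The only nontrivial input is the embedding $\mathcal{PM}^2\hookrightarrow L^{3,\infty}(\RR^3)$ already established in Lemma~\ref{lem.weakl-3}, so the proof amounts to a couple of lines combining these two ingredients.
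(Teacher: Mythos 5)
Your proof is correct and follows exactly the paper's route: the paper's own proof is the single sentence that the corollary is an immediate consequence of the embedding $\mathcal{PM}^2\subset L^{3,\infty}(\RR^3)$ from Lemma~\ref{lem.weakl-3}. Your additional remarks on local integrability and on the weak continuity being inherited through the common pairing with Schwartz functions are sound elaborations of what the paper leaves implicit.
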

\begin{proof}
This is an immediate consequence of the continuous imbedding $\mathcal{PM}^2\subset L^{3,\infty}(\RR^3)$ proved in Lemma \ref{lem.weakl-3}.
\end{proof}
In the next step, we compare $u(x,t)$ with the shifted Slezkin-Landau solution $V^c_{\gamma}(x,t)=V^c(x-\gamma(t))$. We begin with an integral representation of $V^c$ analogous to that one for $\widehat{u}(\xi,t)$ in \eqref{FDuh}.
\begin{lemma}
Let $(V^c,Q^c)$ be the Slezkin-Landau solution given by  formula \eqref{sing-sol}. Denote $V_{\gamma}^c(x,t)\equiv V^c(x-\gamma(t))$. Then
\begin{equation}\label{eq.V}
\widehat{V^c_\gamma}(\xi,t)=\widehat{B(V^c_\gamma,V^c_{\gamma})}(\xi,t)+\kappa\int_0^te^{-(t-\tau)|\xi|^2}\mathcal{\mathbb{P}}(\xi)e^{i\gamma(\tau)\cdot\xi}\;\mathrm{d}\tau\bar{e}_1
+\widehat{y_0}(\xi,t),
\end{equation}
for all $(\xi,t)\in\RR^3\times\RR^+$, where
\begin{equation}\label{eq.y0}
\widehat{y_0}(\xi,t)
=-|\xi|^2\int_0^te^{-(t-\tau)|\xi|^2}\left(e^{i\gamma(\tau)\cdot\xi}-e^{i\gamma(t)\cdot\xi}\right)\widehat{V}^c(\xi)\;\mathrm{d}\tau+e^{-t|\xi|^2}
e^{i\gamma(t)\cdot\xi}\widehat{V}^c(\xi),
\end{equation}
and the bilinear form $B(\cdot,\cdot)$ is defined in \eqref{eq.Bform}.
\end{lemma}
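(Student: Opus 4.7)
The plan is to derive the identity \eqref{eq.V} by starting from the stationary distributional equation for $(V^c,Q^c)$ provided by Proposition~\ref{prop:sing-sol} and then building the Duhamel-type integral representation for the translated profile $V^c_\gamma$ using purely Fourier-side manipulations.

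First I would reformulate \eqref{w-eq} as the distributional identity $-\Delta V^c + \nabla\cdot(V^c\otimes V^c) + \nabla Q^c = \kappa \delta_0 \bar{e}_1$ in $\RR^3$, apply the Leray projection $\mathbb{P}$ to kill the pressure term, and take the Fourier transform, obtaining
\begin{equation*}
|\xi|^2 \widehat{V^c}(\xi) = i\xi\,\widehat{\mathbb{P}}(\xi)\cdot \widehat{V^c\otimes V^c}(\xi) + \kappa\, \widehat{\mathbb{P}}(\xi)\bar{e}_1
\end{equation*}
for every $\xi\in\RR^3\setminus\{0\}$. The key elementary fact is that since $V^c_\gamma(x,\tau) = V^c(x-\gamma(\tau))$, the product $V^c_\gamma\otimes V^c_\gamma$ is itself the translate of $V^c\otimes V^c$ by $\gamma(\tau)$, so that $\widehat{V^c_\gamma\otimes V^c_\gamma}(\xi,\tau) = e^{i\gamma(\tau)\cdot\xi}\widehat{V^c\otimes V^c}(\xi)$ and $\widehat{V^c_\gamma}(\xi,\tau) = e^{i\gamma(\tau)\cdot\xi}\widehat{V^c}(\xi)$. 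Multiplying the stationary identity by $e^{i\gamma(\tau)\cdot\xi}$ therefore yields a pointwise-in-$\tau$ identity for $\widehat{V^c_\gamma}(\xi,\tau)$.

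Next I would multiply this shifted identity by $e^{-(t-\tau)|\xi|^2}$ and integrate in $\tau$ over $(0,t)$. By the definition \eqref{eq.Bform}, the nonlinear contribution on the right is exactly $\widehat{B(V^c_\gamma, V^c_\gamma)}(\xi,t)$, and the force contribution reproduces the singular-force Duhamel integral appearing in \eqref{eq.V}. Proving \eqref{eq.V} thus reduces to the algebraic identity
\begin{equation*}
\widehat{V^c_\gamma}(\xi,t) - \int_0^t e^{-(t-\tau)|\xi|^2}|\xi|^2\widehat{V^c_\gamma}(\xi,\tau)\,\mathrm{d}\tau = \widehat{y_0}(\xi,t),
\end{equation*}
which I would verify by an add-and-subtract trick: write $\widehat{V^c_\gamma}(\xi,\tau) = \widehat{V^c_\gamma}(\xi,t) + (e^{i\gamma(\tau)\cdot\xi}-e^{i\gamma(t)\cdot\xi})\widehat{V^c}(\xi)$, note that the $\tau$-constant piece contributes $(1-e^{-t|\xi|^2})\widehat{V^c_\gamma}(\xi,t)$ after integration against $|\xi|^2 e^{-(t-\tau)|\xi|^2}$, combining with the leading $\widehat{V^c_\gamma}(\xi,t)$ to produce the boundary term $e^{-t|\xi|^2}e^{i\gamma(t)\cdot\xi}\widehat{V^c}(\xi)$, while the $\tau$-dependent remainder reproduces the first summand on the right of \eqref{eq.y0}.

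The argument is essentially algebraic bookkeeping once the stationary Fourier identity is in place, so I do not anticipate any serious analytic obstacle. The one point requiring a little care is that $\gamma$ is only H\"older continuous, which rules out the tempting alternative of first differentiating $\widehat{V^c_\gamma}(\xi,\cdot)$ in $\tau$ and invoking Duhamel's formula directly; this is exactly why one must derive the \emph{integrated} identity via the stationary equation, with the error term $y_0$ absorbing the time regularity deficit. A minor technicality is that the pointwise manipulations are valid only on $\RR^3\setminus\{0\}$, but this is harmless since tempered distributions in $\mathcal{PM}^2$ are determined by their Fourier transforms off the origin.
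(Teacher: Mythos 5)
Your proposal is correct and follows essentially the same route as the paper: start from the stationary Fourier-side identity for $V^c$ obtained from Proposition \ref{prop:sing-sol}, modulate by $e^{i\gamma(\tau)\cdot\xi}$ to translate along the curve, integrate against $e^{-(t-\tau)|\xi|^2}$, and isolate $\widehat{y_0}$ by the same add-and-subtract of $e^{i\gamma(t)\cdot\xi}\widehat{V^c}(\xi)$ together with $|\xi|^2\int_0^te^{-(t-\tau)|\xi|^2}\,\mathrm{d}\tau=1-e^{-t|\xi|^2}$. The only discrepancy is an inessential sign in the quadratic term of the stationary identity, on which the paper itself is not fully consistent between \eqref{eq.FL} and \eqref{eq.Bform}.
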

\begin{proof}
Since $V^c(x)$ is a distribution solution of the Navier-Stokes system with the singular force $\kappa\delta_0\bar{e}_1$, in the Fourier variables, we have
\begin{equation}\label{eq.FL}
|\xi|^2\widehat{V}^c(\xi)=-\widehat{\mathbb{P}}(\xi)i\xi\widehat{\left(V^c\otimes V^c\right)}(\xi)+\kappa\widehat{\mathbb{P}}(\xi)\bar{e}_1.
\end{equation}
To derive the Fourier integral representation \eqref{eq.V}, we notice that $e^{i\gamma(t)\cdot\xi}\widehat{V}^c(\xi)=\widehat{V^c_\gamma}(\xi,t)$
and $e^{i\gamma(t)\cdot\xi}\widehat{(V^c\otimes V^c)}(\xi)=\widehat{(V^c_\gamma\otimes V^c_\gamma)}(\xi)$. Hence, multiplying equation \eqref{eq.FL} by $e^{i\gamma(t)\cdot\xi}$, we obtain the relation
\begin{equation*}\label{eq.FLG}
|\xi|^2\widehat{V^c_\gamma}(\xi,t)=-\widehat{\mathbb{P}}(\xi)i\xi\widehat{(V^c_\gamma\otimes V^c_{\gamma})}(\xi,t)+\kappa e^{i\gamma(t)\cdot\xi}\bar{e}_1,
\end{equation*}
for all $\xi\in\RR^3$ and $t\geq0$, which is equivalent by a direct calculation to the following formula
\begin{equation*}
\begin{split}
\widehat{V^c_\gamma}(\xi,t)\equiv&-\int_0^te^{-(t-\tau)|\xi|^2}\mathcal{\mathbb{P}}(\xi)i\xi\widehat{(V^c_\gamma\otimes V^c_\gamma)}(\xi,\tau)\;\mathrm{d}\tau+\kappa\int_0^te^{-(t-\tau)|\xi|^2}\mathcal{\mathbb{P}}(\xi)e^{i\gamma(\tau)\cdot\xi}\;\,\mathrm{d}\tau\bar{e}_1\\
&-|\xi|^2\int_0^te^{-(t-\tau)|\xi|^2}\widehat{V^c_\gamma}(\xi,\tau)\;\mathrm{d}\tau+\widehat{V^c_{\gamma}}(\xi,t).
\end{split}
\end{equation*}
Let us modify the sum of the last two terms  on the right-hand side of the above identity using the relation $\widehat{V_\gamma^c}(\xi,t)=e^{i\gamma(t)\cdot\xi}\widehat{V^c}(\xi)$ in the following way
\begin{equation}\label{eq.y00}
\begin{split}
\widehat{y_0}(\xi,t)=&-|\xi|^2\int_0^te^{-(t-\tau)|\xi|^2}e^{i\gamma(\tau)\cdot\xi}\widehat{V^c}(\xi)\;\mathrm{d}\tau+\widehat{V_{\gamma}^c}(\xi,t)\\
=&-|\xi|^2\int_0^te^{-(t-\tau)|\xi|^2}\left(e^{i\gamma(\tau)\cdot\xi}-e^{i\gamma(t)\cdot\xi}\right)\widehat{V^c}(\xi)\;\mathrm{d}\tau+e^{-t|\xi|^2}
e^{i\gamma(t)\cdot\xi}\widehat{V^c}(\xi).
\end{split}
\end{equation}
Hence, recalling the bilinear form \eqref{eq.Bform},
we obtain the integral equation \eqref{eq.V}.
\end{proof}

Now, we consider the difference
\begin{equation*}
\widehat{\omega}(\xi,t)=\widehat{u}(\xi,t)-\widehat{V_\gamma^c}(\xi,t),
\end{equation*}
which by equations \eqref{FDuh} and \eqref{eq.V} satisfies
\begin{equation}\label{eq.DFw}
\begin{split}
\widehat{\omega}(\xi,t)=&\widehat{B(u,u)}(\xi,t)-\widehat{B(V^c_\gamma,V^c_\gamma)}(\xi,t)+\widehat{y_0}(\xi,t)\\
=&\widehat{B(\omega,\omega)}(\xi,t)+\widehat{B(V^c_\gamma,\omega)}(\xi,t)+\widehat{B(\omega,V^c_\gamma)}(\xi,t)+\widehat{y_0}(\xi,t).
\end{split}
\end{equation}
Following the notations from the proof of Theorem \ref{THM.1}, we write equation \eqref{eq.DFw} in the following abridged form
\begin{equation}\label{eq.w:B}
\omega=B(\omega,\omega)+B(V^c_\gamma,\omega)+B(\omega,V^c_\gamma)+y_0,
\end{equation}
where the bilinear form $B(\cdot,\cdot)$ is defined in \eqref{eq.Bform} and $y_0$ in \eqref{eq.y0}.

Let us prove a counterpart of Theorem \ref{THM.1} in the case of equation \eqref{eq.w:B}.
\begin{theorem}\label{THM.w}
Let $\gamma:[0,\infty)\to\RR^3$ be arbitrary. Then there exists $c_{\rm  0}>12K\eta_2$ where $\eta_2$ is constant from \eqref{eq.Ya-Est} and $K$ defined in \eqref{eq.Landau-small} such that for every $|c|\geq c_0$ equation \eqref{eq.w:B} has a solution $\omega\in C_{\rm w}\big([0,\infty);\mathcal{PM}^2\big)$. Moreover, this is a unique solution which satisfies $\sup_{t>0}\|\omega(t)\|_{\mathcal{PM}^2}\leq  \frac{1-2\eta_2\|V^c\|_{\mathcal{PM}^2}}{2\eta_2}$.
\end{theorem}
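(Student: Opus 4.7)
\medskip

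\noindent\textbf{Proof plan for Theorem \ref{THM.w}.} The plan is to apply the abstract fixed-point Lemma \ref{lem:xyB} in the Banach space $X=\mathcal{Y}_\infty^2=C_{\rm w}\big([0,\infty);\mathcal{PM}^2\big)$, exactly as in the proof of Theorem \ref{THM.1}, but now with a non-zero linear term. I rewrite equation \eqref{eq.w:B} as $\omega = y_0 + L(\omega)+B(\omega,\omega)$, where $B$ is the bilinear form defined by \eqref{eq.Bform}, and $L(\omega):=B(V^c_\gamma,\omega)+B(\omega,V^c_\gamma)$ is a linear operator on $X$. To invoke Lemma \ref{lem:xyB} I need three ingredients: (a) continuity of the forcing $y_0$ in $\mathcal{PM}^2$ with a good bound, (b) a contraction constant $\lambda<1$ for $L$, and (c) the standard quadratic bound on $B$, which is Lemma \ref{lem.bil} with $a=2$ (and $T=\infty$), giving $\eta=\eta_2$.

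\smallskip

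\noindent\emph{Step 1: estimate $y_0$.} Starting from formula \eqref{eq.y0} and using the elementary bounds $|\widehat{V^c}(\xi)|\le \|V^c\|_{\mathcal{PM}^2}|\xi|^{-2}$ together with $\bigl|e^{i\gamma(\tau)\cdot\xi}-e^{i\gamma(t)\cdot\xi}\bigr|\le 2$, the two terms of $\widehat{y_0}(\xi,t)$ are controlled respectively by $2\|V^c\|_{\mathcal{PM}^2}|\xi|^{-2}(1-e^{-t|\xi|^2})$ and $\|V^c\|_{\mathcal{PM}^2}|\xi|^{-2}e^{-t|\xi|^2}$, which yields
\begin{equation*}
|\xi|^2\bigl|\widehat{y_0}(\xi,t)\bigr|\le 2\|V^c\|_{\mathcal{PM}^2}\quad\text{uniformly in }t\ge 0,\xi\neq 0.
\end{equation*}
Hence $|||y_0|||_{2,\infty}\le 2\|V^c\|_{\mathcal{PM}^2}$. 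The weak continuity $y_0\in C_{\rm w}([0,\infty);\mathcal{PM}^2)$ follows verbatim from the dominated-convergence argument used in Step~1 of the proof of Theorem \ref{THM.HEAT}, since for each fixed $\xi$ the integrand $\widehat{y_0}(\xi,t)$ is continuous in $t$ and is dominated by an integrable majorant when tested against $\widehat{\varphi}\in\mathcal{S}(\RR^3)$.

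\smallskip

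\noindent\emph{Step 2: estimate the linear part.} Translation does not affect the $\mathcal{PM}^2$-norm, so $\|V^c_\gamma(\cdot,t)\|_{\mathcal{PM}^2}=\|V^c\|_{\mathcal{PM}^2}$ for every $t\ge 0$; in particular $|||V^c_\gamma|||_{2,\infty}=\|V^c\|_{\mathcal{PM}^2}$. Applying Lemma \ref{lem.bil} to each of the two summands of $L$, I get
\begin{equation*}
|||L(\omega)|||_{2,\infty}\le 2\eta_2\|V^c\|_{\mathcal{PM}^2}\,|||\omega|||_{2,\infty}=:\lambda\,|||\omega|||_{2,\infty}.
\end{equation*}
By Lemma \ref{lem.Landau-small}, $\lambda\le 2\eta_2 K/|c|$, so choosing $|c|$ large enough makes $\lambda<1$.

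\smallskip

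\noindent\emph{Step 3: smallness condition and conclusion.} With $\eta=\eta_2$, $\lambda=2\eta_2\|V^c\|_{\mathcal{PM}^2}$ and $\|y_0\|\le 2\|V^c\|_{\mathcal{PM}^2}$, the hypothesis $4\eta\|y_0\|<(1-\lambda)^2$ of Lemma \ref{lem:xyB} becomes $8\eta_2\|V^c\|_{\mathcal{PM}^2}<(1-2\eta_2\|V^c\|_{\mathcal{PM}^2})^2$. Setting $s=2\eta_2\|V^c\|_{\mathcal{PM}^2}$ this reduces to $s^2-6s+1>0$, i.e.\ $s<3-2\sqrt{2}$. Using again $\|V^c\|_{\mathcal{PM}^2}\le K/|c|$, one checks that any $|c|\ge c_0$ with $c_0>12K\eta_2$ (a value slightly larger than $\tfrac{2K\eta_2}{3-2\sqrt{2}}\approx 11.66\,K\eta_2$) secures the smallness condition. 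Lemma \ref{lem:xyB} then provides a solution $\omega\in \mathcal{Y}_\infty^2$, unique among all $\omega$ with $|||\omega|||_{2,\infty}<\tfrac{1-\lambda}{2\eta}=\tfrac{1-2\eta_2\|V^c\|_{\mathcal{PM}^2}}{2\eta_2}$, which is precisely the bound in the statement.

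\smallskip

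\noindent\emph{Expected difficulty.} The whole argument is a direct transcription of the fixed-point scheme of Theorem \ref{THM.1}, so no new analytic input is required beyond Lemmas \ref{lem.bil}, \ref{lem:xyB} and \ref{lem.Landau-small}. The only delicate point is Step~1: one must verify that the cancellation in the first summand of \eqref{eq.y0} (equivalently, writing $y_0$ as a difference containing $(e^{i\gamma(\tau)\cdot\xi}-e^{i\gamma(t)\cdot\xi})$) is \emph{not needed} at the level $a=2$, i.e.\ the crude triangle-inequality bound already lands in $\mathcal{PM}^2$ uniformly in time. Higher regularity of $\omega$ (passage to the spaces $\mathcal{Y}_T^a$ with $a>2$, which is what will ultimately encode property \eqref{eq.thm-item-2} of Theorem \ref{THM.SNS}) is where the H\"older exponent of $\gamma$ will enter, but that is a subsequent step beyond the present statement.
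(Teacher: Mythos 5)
Your proposal is correct and follows essentially the same route as the paper: the paper likewise rewrites \eqref{eq.w:B} as $\omega=y_0+L\omega+B(\omega,\omega)$ with $L\omega=B(V^c_\gamma,\omega)+B(\omega,V^c_\gamma)$, bounds $\lambda=2\eta_2\|V^c\|_{\mathcal{PM}^2}<\tfrac16$ and $\sup_{t>0}\|y_0(t)\|_{\mathcal{PM}^2}\le 2\|V^c\|_{\mathcal{PM}^2}$ via Lemmas \ref{lem.bil} and \ref{lem.Landau-small}, and then invokes Lemma \ref{lem:xyB}. Your explicit reduction of the smallness condition to $s^2-6s+1>0$ is a slightly sharper bookkeeping of the same verification the paper performs with $\lambda<\tfrac16$, and the rest matches.
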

\begin{proof}
Here, the reasoning is completely analogous to the one in the proof of Theorem~\ref{THM.1}. We apply Lemma \ref{lem:xyB} to equation \eqref{eq.w:B} with the linear operator
$L\,\omega=B(V^c_\gamma,\omega)+B(\omega,V^c_\gamma).$
It follows from estimate \eqref{eq.Ya-Est} with $a=2$ and $T=\infty$ that
\begin{equation*}
\sup_{t>0}\|L\,\omega(t)\|_{\mathcal{PM}^2}\leq 2\eta_2\sup_{t>0}\|V^c_\gamma\|_{\mathcal{PM}^2}\sup_{t>0}\|\omega(t)\|_{\mathcal{PM}^2}=2\eta_2\|V^c\|_{\mathcal{PM}^2}\sup_{t>0}\|\omega(t)\|_{\mathcal{PM}^2}.
\end{equation*}

Now, we apply Lemma \ref{lem:xyB} with $\lambda=2\eta_2\|V^c\|_{\mathcal{PM}^2}$ and $\eta=\eta_2$ in the following way. Notice that, by Lemma~\ref{lem.Landau-small}, we have $\lambda\leq2\eta_2\frac{K}{|c|}<\frac16$ for all $|c|\geq c_0>12K\eta_2$.
Next, by a direct calculation, if $c_0>12K\eta_2$, we have
\begin{equation*}
\|y_0(t)\|_{\mathcal{PM}^2}\leq\|V^c\|_{\mathcal{PM}^2}\sup_{\xi\in\RR^3}\big(1-e^{-t|\xi|^2}\big)+\|V^c\|_{\mathcal{PM}^2}\leq2\|V^c\|_{\mathcal{PM}^2}.
\end{equation*}
Thus, by a direct calculation, if $c_0>12K\eta_2$ we have
\begin{equation}\label{eq.y0*}
4\eta_2\sup_{t>0}\|y_0(t)\|_{\mathcal{PM}^2}\leq8\eta_2\|V^c\|_{\mathcal{PM}^2}<(1-\lambda)^2
\end{equation} for all $|c|\geq c_0$, which is possible due to Lemma \ref{lem.Landau-small}.

Finally, using inequality \eqref{eq.Ya-Est} with $a=2$ and $T=\infty$ and applying Lemma \ref{lem:xyB}, we obtain a solution $\omega\in \mathcal{Y}^2_\infty$ of equation \eqref{eq.DFw}.
\end{proof}
The following corollary is a direct consequence of the uniqueness of solution to the considered equations.
\begin{corollary}
 Let $u$ be a solution to problem \eqref{eq.SNS} constructed in Theorem \ref{THM.1}. Then, choosing $c_0$ sufficiently large, we obtain that the solution $\omega$ in Theorem \ref{THM.w} is of the form $\omega=u-V^c_\gamma$ for all $|c|\geq c_0$.
\end{corollary}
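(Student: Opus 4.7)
The plan is to identify $\tilde\omega := u - V^c_\gamma$ with the solution $\omega$ produced by Theorem \ref{THM.w} by invoking the uniqueness clause of Lemma \ref{lem:xyB}. First I will check that $\tilde\omega$ satisfies exactly the fixed-point equation \eqref{eq.w:B}. This is essentially already contained in the derivation of \eqref{eq.DFw}: subtracting the integral representation \eqref{eq.V} for $V^c_\gamma$ from the integral equation \eqref{FDuh} for $u$ cancels the common forcing term $\kappa\int_0^te^{-(t-\tau)|\xi|^2}\widehat{\mathbb{P}}(\xi)e^{i\gamma(\tau)\cdot\xi}\,\mathrm{d}\tau\,\bar e_1$, leaving $B(u,u)-B(V^c_\gamma,V^c_\gamma)+y_0$, and bilinearity of $B$ then rewrites this as $B(\tilde\omega,\tilde\omega)+B(\tilde\omega,V^c_\gamma)+B(V^c_\gamma,\tilde\omega)+y_0$, which is \eqref{eq.w:B}.

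Next I will verify the regularity $\tilde\omega\in\mathcal{Y}^2_\infty=C_{\rm w}\big([0,\infty);\mathcal{PM}^2\big)$. Since $u\in\mathcal{Y}^2_\infty$ by Theorem \ref{THM.1} and $\|V^c_\gamma(t)\|_{\mathcal{PM}^2}=\|V^c\|_{\mathcal{PM}^2}\le K/|c|$ by Lemma \ref{lem.Landau-small}, it suffices to observe that $t\mapsto\widehat{V^c_\gamma}(\xi,t)=e^{i\gamma(t)\cdot\xi}\widehat{V^c}(\xi)$ is pointwise continuous, and that weak continuity in $t$ of the tempered distribution then follows from the Lebesgue dominated convergence theorem applied exactly as in Step~1 of the proof of Theorem \ref{THM.HEAT}.

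The decisive point is to verify that $\tilde\omega$ lies in the uniqueness ball of Lemma \ref{lem:xyB} when applied to \eqref{eq.w:B} with $\lambda=2\eta_2\|V^c\|_{\mathcal{PM}^2}$ and $\eta=\eta_2$; that is, $\I\tilde\omega\I_{2,\infty}<\frac{1-\lambda}{2\eta_2}$. From Theorem \ref{THM.1} we have $\I u\I_{2,\infty}\le 4|\kappa|$, and Lemma \ref{lem.Landau-small} gives $\|V^c\|_{\mathcal{PM}^2}\le K/|c|$, so
\[
\I\tilde\omega\I_{2,\infty}\le 4|\kappa|+\frac{K}{|c|},\qquad \frac{1-\lambda}{2\eta_2}\ge \frac{1}{2\eta_2}-\frac{K}{|c|}.
\]
The hypothesis $|\kappa|<\frac{1}{8\eta_2}$ of Theorem \ref{THM.1} guarantees $4|\kappa|<\frac{1}{2\eta_2}$, so both conditions are compatible provided we enlarge $c_0$, if necessary, beyond the threshold $12K\eta_2$ already appearing in Theorem \ref{THM.w}. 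For such $c_0$ and any $|c|\ge c_0$, the uniqueness part of Lemma \ref{lem:xyB} forces $\tilde\omega=\omega$, giving $u=V^c_\gamma+\omega$. I do not expect any real obstacle here: the entire argument is a bookkeeping exercise reconciling the two smallness constants inherited from the contraction arguments in Theorems \ref{THM.1} and \ref{THM.w}.
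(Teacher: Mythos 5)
Your argument is correct and is in substance the paper's own proof: both identify the two fixed points through the uniqueness clause of Lemma \ref{lem:xyB}, relying on the derivation of \eqref{eq.DFw} to see that $u-V^c_\gamma$ solves \eqref{eq.w:B} and on the smallness $\|V^c\|_{\mathcal{PM}^2}\le K/|c|$ from Lemma \ref{lem.Landau-small}. The only (immaterial) difference is the direction of the identification: you place $u-V^c_\gamma$ inside the uniqueness ball for equation \eqref{eq.w:B}, whereas the paper bounds the constructed $\omega$ by $4K/(|c|-2\eta_2K)$ and then appeals to the uniqueness of $u$ for equation \eqref{eq.uBy}.
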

\begin{proof}
We use the notation from the proof of Theorem \ref{THM.w}. By Lemma \ref{lem:xyB}, estimate \eqref{eq.y0*}, and inequality \eqref{eq.Landau-small}, the solution $\omega$ of equation \eqref{eq.DFw} satisfies
\begin{equation*}
\|\omega\|_{\mathcal{Y}_\infty^2}\leq\frac{4\|V^c\|_{\mathcal{PM}^2}}{1-2\eta_2\|V^c\|_{\mathcal{PM}^2}}\leq\frac{4K}{|c|-2\eta_2K}.
\end{equation*}
Thus, for sufficiently large $|c|$, we have $\omega=u-V^c_\gamma$ by the uniqueness of the solution $u$ to equation \eqref{eq.uBy} established in Theorem \ref{THM.1} and the uniqueness of $\omega$ from Theorem~\ref{THM.w}.
\end{proof}

In the next step, we prove that the solution $\omega\in\mathcal{Y}^2_\infty$  of equation \eqref{eq.w:B} satisfies $\omega\in\mathcal{Y}_{T}^a$ with suitable $a>2$.
\begin{theorem}\label{THM.w.Ya}
Let $\gamma:[0,\infty)\to\RR^3$ be H\"older continuous with an exponent $\alpha\in(\frac12,1]$. Assume, in addition, that $|c|>\max\{c_{\rm 0},\,2(8\eta_a+\eta_2)K\}$. Then, the solution $\omega$ constructed in Theorem \ref{THM.w} satisfies $\omega\in \mathcal{Y}^a_T$ for each $a\in[2,1+2\alpha]$ and each $T>0$.
\end{theorem}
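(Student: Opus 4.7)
The plan is to repeat in the space $\mathcal{Y}^a_T$ the fixed-point argument already carried out in $\mathcal{Y}^2_\infty$ in the proof of Theorem~\ref{THM.w}. Concretely, I would apply Lemma~\ref{lem:xyB} to equation~\eqref{eq.w:B}, viewed as $\omega = y_0 + L\omega + B(\omega,\omega)$ with $L\omega := B(V^c_\gamma,\omega)+B(\omega,V^c_\gamma)$, in the Banach space $\mathcal{Y}^a_T$ endowed with its natural norm $\|v\|_{\mathcal{Y}^a_T} = \I v \I_{2,T} + \I v \I_{a,T}$. The resulting fixed point $\widetilde\omega\in\mathcal{Y}^a_T\subset\mathcal{Y}^2_T$ satisfies \eqref{eq.w:B} on $[0,T]$ and obeys the smallness condition of Theorem~\ref{THM.w}; uniqueness there then forces $\widetilde\omega=\omega$ on $[0,T]$, delivering the claim.

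The principal technical step is to verify that $y_0\in\mathcal{Y}^a_T$ for every $a\in[2,1+2\alpha]$, which is the exact analogue of Step~3 in the proof of Theorem~\ref{THM.HEAT}. Using the representation~\eqref{eq.y0} together with the pointwise bound $|\widehat{V^c}(\xi)|\le \|V^c\|_{\mathcal{PM}^2}/|\xi|^2$, I would estimate each of the two terms in the $\mathcal{PM}^a$-norm by splitting $\xi$ into low and high frequencies: on $|\xi|\le 1$ the trivial bound $|e^{i\gamma(\tau)\cdot\xi}-e^{i\gamma(t)\cdot\xi}|\le 2$ suffices, while on $|\xi|\ge 1$ the H\"older continuity of $\gamma$ yields $|e^{i\gamma(\tau)\cdot\xi}-e^{i\gamma(t)\cdot\xi}|\le C(T)|\xi||t-\tau|^\alpha$. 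After the change of variables $s=(t-\tau)|\xi|^2$, the high-frequency part of the first summand of $y_0$ is controlled by $C(T)\|V^c\|_{\mathcal{PM}^2}|\xi|^{a-1-2\alpha}$, whose essential supremum is finite precisely when $a\le 1+2\alpha$; the second summand is absorbed into a Gauss-kernel bound of the form $Ct^{-(a-2)/2}\|V^c\|_{\mathcal{PM}^2}$. Multiplying by $t^{a/2-1}$ and taking the supremum over $t\in(0,T]$ yields $\I y_0\I_{a,T}\le C(T,a,\alpha)\|V^c\|_{\mathcal{PM}^2}$.

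For the remaining terms, Lemma~\ref{lem.bil} applied on the finite interval $[0,T]$ directly gives
\begin{equation*}
\I B(V^c_\gamma,\omega)\I_{a,T}\le \eta_a\|V^c\|_{\mathcal{PM}^2}\I\omega\I_{a,T},\qquad \I B(\omega,\omega)\I_{a,T}\le \eta_a\I\omega\I_{2,T}\I\omega\I_{a,T},
\end{equation*}
and the corresponding bound for $B(\omega,V^c_\gamma)$ follows from the fact that the pointwise Fourier estimate underlying Lemma~\ref{lem.bil} is symmetric under the exchange $|\widehat u|\ast|\widehat v|=|\widehat v|\ast|\widehat u|$, so that only $|||V^c_\gamma|||_{2,T}=\|V^c\|_{\mathcal{PM}^2}$ is ever called upon. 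Combined with the $\I\cdot\I_{2,T}$-estimates from the proof of Theorem~\ref{THM.w}, this produces in the norm of $\mathcal{Y}^a_T$ a linear operator of size at most $2(\eta_2+\eta_a)\|V^c\|_{\mathcal{PM}^2}$ and a bilinear form of size at most $\eta_2+\eta_a$. By Lemma~\ref{lem.Landau-small}, the hypothesis $|c|>2(8\eta_a+\eta_2)K$ renders $\lambda:=2(\eta_2+\eta_a)\|V^c\|_{\mathcal{PM}^2}$ strictly less than $1$ and simultaneously secures the smallness condition $4(\eta_2+\eta_a)\|y_0\|_{\mathcal{Y}^a_T}<(1-\lambda)^2$ of Lemma~\ref{lem:xyB}, which then delivers the fixed point. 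I expect the main obstacle to be the frequency-localised bookkeeping for $y_0$, where the sharp threshold $a\le 1+2\alpha$ emerges from the exponent after the change of variables; the remaining steps transcribe the $\mathcal{Y}^2_\infty$ argument of Theorem~\ref{THM.w} in a direct manner.
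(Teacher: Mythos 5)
Your verification that $y_0\in\mathcal{Y}^a_T$ is essentially the paper's Lemma \ref{lem.y0-a}, and your observation that the estimate behind Lemma \ref{lem.bil} can be used symmetrically --- so that only $\|V^c\|_{\mathcal{PM}^2}$, never a $\mathcal{PM}^a$-norm of $V^c$, is invoked for $B(\omega,V^c_\gamma)$ --- is both correct and necessary, since $V^c\notin\mathcal{PM}^a$ for $a>2$. The gap is in the final application of Lemma \ref{lem:xyB} in the full norm $\|\cdot\|_{\mathcal{Y}^a_T}=\I \cdot\I_{2,T}+\I \cdot\I_{a,T}$: the required smallness $4\eta\|y_0\|_{\mathcal{Y}^a_T}<(1-\lambda)^2$ is \emph{not} secured by the hypothesis $|c|>2(8\eta_a+\eta_2)K$. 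Your own estimate reads $\I y_0\I_{a,T}\leq C(T,a,\alpha)\|V^c\|_{\mathcal{PM}^2}$, and this constant genuinely grows with $T$: the high-frequency bound $C(T)|\xi|^{a-1-2\alpha}$ on the integral term of \eqref{eq.y0} carries the H\"older constant of $\gamma$ on $[0,T]$ and has no decay in $t$, so after multiplication by $t^{a/2-1}$ it contributes a factor of order $T^{a/2-1}$ for $a>2$. Hence for a fixed $c$ the smallness condition of Lemma \ref{lem:xyB} fails once $T$ is large, whereas the theorem asserts the conclusion for every $T>0$ with $c$ chosen independently of $T$. (A $T$-dependent reweighting of the two components of the norm could repair this, but as written the argument does not close; the subsequent identification of the fixed point with $\omega$ via the uniqueness clause of Theorem \ref{THM.w} inherits the same $T$-dependent smallness problem.)

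The paper avoids this by never requiring $y_0$ to be small in $\mathcal{Y}^a_T$. It runs the Picard iteration $\omega_{n+1}=\mathcal{T}(\omega_n)$ on the set $B(0,A)=\{\omega\in\mathcal{Y}^a_T:\,\I\omega\I_{2,\infty}\leq A\}$, whose invariance is checked purely in the $\mathcal{PM}^2$-norm (where $y_0$ \emph{is} uniformly small), and then shows that $\mathcal{T}$ contracts in the seminorm $\I\cdot\I_{a,T}$ alone, with contraction constant $2\eta_a\|V^c\|_{\mathcal{PM}^2}+\eta_a\big(\I\omega\I_{2,\infty}+\I\bar{\omega}\I_{2,\infty}\big)<1$ depending only on $\|V^c\|_{\mathcal{PM}^2}$ and the small $2$-norms of the iterates, never on $\I y_0\I_{a,T}$. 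The possibly large size of $\I y_0\I_{a,T}$ then only inflates the $\mathcal{Y}^a_T$-norm of the limit, not the convergence of the scheme. You should restructure your last step along these lines (or introduce the $T$-dependent weighted norm) rather than invoking Lemma \ref{lem:xyB} verbatim in $\mathcal{Y}^a_T$.
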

In the proof of this theorem, we need the following property of $y_0$ defined by formula~\eqref{eq.y0}.
\begin{lemma}\label{lem.y0-a}
Let $\gamma:[0,\infty)\rightarrow\RR^3$ be H\"older continuous with an exponent $\alpha\in(\frac12,1]$. Then, for each $T>0$ and each $a\in[2,1+2\alpha]$, we have $y_0\in\mathcal{Y}_{T}^a$.
\end{lemma}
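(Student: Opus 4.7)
The plan is to verify directly from the explicit formula \eqref{eq.y0} that $y_0$ meets both conditions defining $\mathcal{Y}_T^a$, namely weak continuity into $\mathcal{PM}^2$ and finiteness of $|||\cdot|||_{a,T}$. The argument closely parallels Steps~1--3 in the proof of Theorem~\ref{THM.HEAT}, with $\widehat{V}^c(\xi)$, controlled via Lemma~\ref{lem.Landau-small} by $K/(|c||\xi|^2)$, playing the role of $1/|\xi|^2$, and the H\"older continuity of $\gamma$ producing the decisive bound on the difference $e^{i\gamma(\tau)\cdot\xi}-e^{i\gamma(t)\cdot\xi}$.

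First I would establish the uniform bound $\sup_{t\geq 0}\|y_0(t)\|_{\mathcal{PM}^2}<\infty$. For the second summand in \eqref{eq.y0} this is immediate, since $e^{-t|\xi|^2}\leq 1$ forces $|\widehat{y_0}^{(2)}(\xi,t)|\leq |\widehat{V}^c(\xi)|$. For the first summand, the crude bound $|e^{i\gamma(\tau)\cdot\xi}-e^{i\gamma(t)\cdot\xi}|\leq 2$ together with $|\xi|^2\int_0^t e^{-(t-\tau)|\xi|^2}\,\mathrm{d}\tau=1-e^{-t|\xi|^2}\leq 1$ gives $|\widehat{y_0}^{(1)}(\xi,t)|\leq 2|\widehat{V}^c(\xi)|$. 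Thus $|\widehat{y_0}(\xi,t)|\leq 3|\widehat{V}^c(\xi)|\leq 3K/(|c||\xi|^2)$. Pointwise in $\xi$, the function $t\mapsto\widehat{y_0}(\xi,t)$ is clearly continuous, so weak continuity $y_0\in C_{\rm w}([0,T),\mathcal{PM}^2)$ follows from the Lebesgue dominated convergence theorem applied exactly as in Step~1 of the proof of Theorem~\ref{THM.HEAT}.

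The main step is the $\mathcal{PM}^a$ bound for $a\in(2,1+2\alpha]$. For the second summand I estimate $|\xi|^a e^{-t|\xi|^2}|\widehat{V}^c(\xi)|\leq \tfrac{K}{|c|}|\xi|^{a-2}e^{-t|\xi|^2}$, whose supremum in $\xi$ is $O(t^{-(a-2)/2})$, so multiplying by $t^{a/2-1}$ yields a constant independent of $t>0$. For the first summand I use H\"older continuity, $|e^{i\gamma(\tau)\cdot\xi}-e^{i\gamma(t)\cdot\xi}|\leq C(T)|t-\tau|^\alpha|\xi|$, and then the substitution $s=|\xi|^2(t-\tau)$ gives
\begin{equation*}
\bigl|\widehat{y_0}^{(1)}(\xi,t)\bigr|\leq \frac{C(T)K\,\Gamma(\alpha+1)}{|c|}\,\frac{1}{|\xi|^{1+2\alpha}}.
\end{equation*}
Combining this sharp high-frequency estimate with the earlier low-frequency bound $\lesssim |\xi|^{-2}$, I split according to $|\xi|\leq 1$ and $|\xi|>1$ and obtain $\sup_\xi|\xi|^a|\widehat{y_0}^{(1)}(\xi,t)|\leq C(T,a,c)$ whenever $2\leq a\leq 1+2\alpha$. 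Multiplication by $t^{a/2-1}\leq T^{a/2-1}$ then gives $|||y_0|||_{a,T}<\infty$, completing the proof.

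The principal (mild) obstacle is to keep track of the two complementary pointwise bounds on the first summand: the crude $|\xi|^{-2}$ estimate is only useful for low frequencies, whereas the H\"older-based $|\xi|^{-1-2\alpha}$ estimate handles high frequencies. It is precisely these two estimates that force the upper endpoint $a\leq 1+2\alpha$ and require the strict inequality $\alpha>\tfrac12$ in order that the admissible interval $[2,1+2\alpha]$ be non-trivial, matching the hypothesis imposed in Theorem~\ref{THM.SNS}.
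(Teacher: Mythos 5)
Your proposal is correct and follows essentially the same route as the paper: the authors simply observe that $\widehat{y_0}$ has the same structure as $\widehat{\omega_0}$ from the heat-equation case with $1/|\xi|^2$ replaced by $\widehat{V^c}(\xi)$, which satisfies $|\widehat{V^c}(\xi)|\leq C/|\xi|^2$, and then invoke Step~3 of the proof of Theorem~\ref{THM.HEAT} --- precisely the low-frequency/high-frequency splitting and H\"older-continuity estimate you carry out explicitly. Your write-up just supplies the details (and the additional weak-continuity check) that the paper delegates to that earlier argument.
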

\begin{proof}
First, we notice that $y_0(t)$ given by \eqref{eq.y0} has the form of the tempered distribution $\omega(t)$ defined in the case of the heat equation by formula \eqref{eq.omega} with $\frac{1}{|\xi|^2}$ replaced by $\widehat{V^c}(\xi)$.
Since $|\widehat{V^c}(\xi)|\leq \frac{C}{|\xi|^2}$, to complete the proof of this lemma, it suffices to repeat the reasoning form Step 3 of the proof of Theorem \ref{THM.HEAT}.
\end{proof}
\begin{proof}[Proof of Theorem \ref{THM.w.Ya}]
The solution $\omega\in \mathcal{Y}^2_\infty=C_{\rm w}\big([0,\infty);\mathcal{PM}^2\big)$ is obtained in Theorem~\ref{THM.w} as a limit of the sequence $\{\omega_n\}\subset\mathcal{Y}^2_\infty$  defined by the recurrence formula
\begin{equation}\label{eq.recurr}
\omega_1\equiv0,\qquad\omega_{n+1}=\mathcal{T}(\omega_n)=B(\omega_n,\omega_n)+B(V^c_\gamma,\omega_n)+B(\omega_n,V^c_\gamma)+y_0.
\end{equation}
Using Lemma \ref{lem.y0-a} and estimate \eqref{eq.Ya-Est},
we obtain immediately that $\omega_n\in \mathcal{Y}^a_T$ for each $n\geq0$ and $T>0$.

We define the set $B(0,A)=\{\,\omega\in \mathcal{Y}^a_T:\,|||\omega|||_{2,\infty}\leq A\,\}$ which is a closed subset of $\mathcal{Y}^a_T$. 
Notice that $B(0,A)$ is bigger than  the  ball of radius $A$
in $\mathcal{Y}^a_T$, because this set  is defined via $|||\cdot|||_{2,\infty}$ which is not the full norm in $\mathcal{Y}_T^a$, see \eqref{Ya}. To show that the sequence $\{\omega_n\}$ converges in $\mathcal{Y}_T^a$, it suffices to prove that the mapping $\mathcal{T}$ defined in \eqref{eq.recurr} satisfies $\mathcal{T}:\, B(0,A)\to B(0,A)$ with $A=\frac{4\|V^c\|_{\mathcal{PM}^2}}{1-2\eta_2\|V^c\|_{\mathcal{PM}^2}}$ and is a contraction.

For $\omega_n\in B(0,A)$, by a simple calculation involving \eqref{eq.recurr} and inequality \eqref{eq.Ya-Est} with $a=2$, we get
\begin{equation*}
\begin{split}
\big\|\mathcal{T}(\omega_{n})\big\|_{\mathcal{PM}^2}\leq & \eta_2\|\omega_n\|_{\mathcal{PM}^2}^2+2\eta_2\|V^c\|_{\mathcal{PM}^2}\|\omega_n\|_{\mathcal{PM}^2}+\|y_0\|_{\mathcal{PM}^2}\\
\leq & \eta_2A^2+2\eta_2\|V^c\|_{\mathcal{PM}^2}A+2\|V^c\|_{\mathcal{PM}^2}\\
=&\frac{2\|V^c\|_{\mathcal{PM}^2}}{1-2\eta_2\|V^c\|_{\mathcal{PM}^2}}\left(1+2\eta_{2}\|V^c\|_{\mathcal{PM}^2}
+\frac{8\eta_2\|V^c\|_{\mathcal{PM}^2}}{1-2\eta_2\|V^c\|_{\mathcal{PM}^2}}\right)\\
\leq&\frac{4\|V^c\|_{\mathcal{PM}^2}}{1-2\eta_2\|V^c\|_{\mathcal{PM}^2}},
\end{split}
\end{equation*}
because $1+2\eta_{2}\|V^c\|_{\mathcal{PM}^2}
+\frac{8\eta_2\|V^c\|_{\mathcal{PM}^2}}{1-2\eta_2\|V^c\|_{\mathcal{PM}^2}}\leq2$ by calculations which are similar to those in~\eqref{eq.y0*}.
Hence, $\mathcal{T}:\,B(0,A)\rightarrow B(0,A)$.

Next, let $\omega,\,\bar{\omega}\in\,B(0,A)\cap \mathcal{Y}^a_T$. Using estimate \eqref{eq.Ya-Est}, we have
\begin{equation*}
\begin{split}
&|||\mathcal{T}\omega-\mathcal{T}\bar{\omega}|||_{a,T}\\
\leq& |||L\,(\omega-\bar{\omega})|||_{a,T}+|||B(\omega,\omega-\bar{\omega})|||_{a,T}+|||B(\omega-\bar{\omega},\bar{\omega})|||_{a,T}\\
\leq&2\eta_a\|V^c\|_{\mathcal{PM}^2}|||\omega-\bar{\omega}|||_{a,T}+\eta_a|||\omega|||_{2,\infty}|||\omega-\bar{\omega}|||_{a,T}+
\eta_a|||\bar{\omega}|||_{2,\infty}|||\omega-\bar{\omega}|||_{a,T}\\
\leq&\big(2\eta_a\|V^c\|_{\mathcal{PM}^2}+\eta_a(|||\omega|||_{2,\infty}+|||\bar{\omega}|||_{2,\infty})\big)|||\omega-\bar{\omega}|||_{a,T}.
\end{split}
\end{equation*}
Since $c> 2(8\eta_a+\eta_2)K$, we have $\|V^c\|_{\mathcal{PM}^2}<\frac{1}{4\eta_a}$ by Lemma \ref{lem.Landau-small}. Moreover, it follows from Theorem \ref{THM.w} that $\sup_{t>0}\|\omega(t)\|_{\mathcal{PM}^2}\leq  \frac{4K}{c-2\eta_2K}<\frac{1}{4\eta_a}$.
Thus, $\mathcal{T}$ is a contraction in the norm $|||\cdot|||_{a,T}$ for sufficiently large $c$ which implies that the sequence $\{\omega_n\}$ converges toward $\omega\in\mathcal{Y}_T^a$.
\end{proof}
We are in a position to complete the proof of the main result from this work.
\begin{proof}[Proof of Theorem \ref{THM.SNS}]
\textit{Step 1: Existence of $u$ and $p$.} Choosing $|c|$ large enough, by Proposition \ref{prop:sing-sol}, we obtain $|\kappa(c)|<\frac{1}{8\eta_2}$.
Hence, Theorem \ref{THM.1} provides a solution $u\in C_{\rm w}\big([0,\infty);\mathcal{PM}^2\big)$. In fact, by Corollary~\ref{coro.5.2}, we have
$u\in L^\infty\big([0,\infty);L^{3,\infty}(\RR^3)\big)$. Moreover, Lemma \ref{lem.bil-weakl3} implies
\begin{equation}\label{eq.u2}
u\otimes u\in L^\infty\big([0,\infty);L^{\frac32,\infty}(\RR^3)\big).\end{equation}

Now, we determine the pressure from equation \eqref{eq.SNS} in the usual way computing the divergence  in the sense of $\mathcal{S}'(\RR^3)$ of the equation in \eqref{eq.SNS}. Thus, by a direct calculation, we obtain in the Fourier variables
\begin{equation}\label{eq.pressure***}
\widehat{p}(\xi,t)=\sum_{i,\,j=1}^3\frac{\xi_i\xi_j}{|\xi|^2}\widehat{(u\otimes u)}(\xi,t)-\kappa\frac{i\xi\cdot\bar{e}_1}{|\xi|^2}e^{i\gamma(t)\cdot\xi}
\end{equation}
or, equivalently,
\begin{equation}\label{eq.presure}
p(x,t)=\sum_{i,\,j=1}^3R_iR_j(u_iu_j)(x,t)-\kappa C_3\frac{x_1-\gamma(t)}{|x-\gamma(t)|^3},
\end{equation}
with a suitable explicit constant $C_3\in\RR$ and the Riesz transforms $R_j$ which are bounded in $L^{p,\infty}(\RR^3)$ for each $p\in(1,\infty)$. Since $\frac{x_1}{|x|^3}\in L^{\frac32,\infty}(\RR^3)$, by \eqref{eq.u2}, we obtain
\begin{equation}\label{eq.pre-est}
p\in L^\infty\big([0,\infty);L^{\frac32,\infty}(\RR^3)\big).
\end{equation}
In particular,   both $u(x,t)$ and $p(x,t)$ are locally integrable functions and not just tempered distributions.

\textit{Step 2.} Now, we prove that $(u,p)$  is a distributional solution of the Navier-Stokes system \eqref{eq.NS} (\textit{i.e.} this system without an external force) in $(\RR^3\times\RR^+)\backslash\Gamma$.
Since $u(x,t)$ satisfies \eqref{FDuh}, we have
\begin{equation*}
0=\int_{\RR^3}i\xi\cdot\widehat{u}(\xi,t)\widehat{\psi}(\xi)\,\mathrm{d}\xi=\int_{\RR^3}u(x,t)\cdot\nabla\psi(x)\;\mathrm{d}x \quad \text{for every}\quad \psi \in C_{\rm c}^\infty\big((\RR^3\times\RR^+)\backslash\Gamma\big).
\end{equation*}
Moreover, it follows from integral equation \eqref{FDuh} that
\begin{equation*}
\partial_t\widehat{u}(\xi,t)+|\xi|^2\widehat{u}(\xi,t)+\widehat{\mathbb{P}}(\xi)i\xi\cdot\widehat{(u\otimes u)}(\xi,t)=\kappa\widehat{\mathbb{P}}(\xi)e^{i\gamma(t)\cdot\xi}\bar{e}_1.
\end{equation*}
Using the definition of $\mathbb{P}$ in \eqref{eq.Leray-pro} and equation \eqref{eq.pressure***}, we obtain
\begin{equation}\label{eq.mild-t}
\partial_t\widehat{u}(\xi,t)+|\xi|^2\widehat{u}(\xi,t)+i\xi\cdot\widehat{u\otimes u}(\xi,t)+i\xi\widehat{p}(\xi,t)=\kappa e^{i\gamma(t)\cdot\xi}\bar{e}_1.
\end{equation}
We multiply both sides of \eqref{eq.mild-t} by $\widehat{\varphi}(\xi,t)$, where $\varphi=(\varphi_1,\varphi_2,\varphi_3)\in \big(C^\infty_{\rm c}\big((\RR^3\times\RR^+)\backslash\Gamma\big)\big)^3$ is arbitrary test function. Integrating the resulting equation with respect to space and time, using the Plancherel theorem and repeating the argument from \eqref{eq.11a}-\eqref{eq.p17}, we obtain
\begin{equation*}
\int_0^t\int_{\RR^3}\Big(u\cdot\big(-\partial_t\varphi-\Delta\varphi\big)-\sum_{i,j}u_iu_j\partial_{x_i}\varphi_j-p\sum_{i}\partial_{x_i}\varphi_i\Big)\;\mathrm{d}x\mathrm{d}t=0.
\end{equation*}
Hence, $(u,p)$ satisfies system \eqref{eq.NS} in $(\RR^3\times\RR^+)\backslash\Gamma$ in the sense of distribution.

\textit{Step 3. Regularity of $u$ and $p$.}
First,
for $\alpha\in\big(\frac12,1\big]$, we apply a regularity criterion for distributional solutions of the Navier-Stokes equations which was recently proved in \cite{LT2013}.
Let $B(x_0,r_0)\times[t_0,t_1]\subset(\RR^3\times\RR^+)\backslash\Gamma$ be an arbitrary cylinder. It follows immediately from \eqref{eq.pre-est} that
the pressure $p\in L^\infty\big([t_1,t_2];L^1(B(x_0,r_0))\big)$. Moreover,  using  Theorem \ref{THM.SNS} and the estimate
 $\|u\|_{L^{3,\infty}(\RR^3)}\leq C\|u\|_{\mathcal{PM}^2}$ from Lemma \ref{lem.weakl-3}, we obtain
$
 \|u\|_{L^\infty\left([t_0,t_1];L^{3,\infty}(B(x_0,r_0))\right)}\leq 4C|\kappa|.
$
We choose $|\kappa|$ in Theorem \ref{THM.SNS} sufficiently small such that the constant $4C|\kappa|$ meets requirements in \cite[Theorem 1.1]{LT2013}, which imply $u\in L^\infty(B(x_0,r_0)\times[t_1+\delta,t_2])$, where $\delta>0$ is a small and fixed number. Thus, we get $u\in L^\infty_{\rm loc}\big((\RR^3\times\RR^+)\backslash\Gamma\big)$.

Next, we consider $\alpha\in(\frac34,1]$.
By Theorem \ref{THM.w.Ya}, we have
$\omega\in \mathcal{Y}^a$ for every $a\in[2,1+2\alpha]$ where $1+2\alpha>\frac52$.
Hence,  we may apply  Lemma \ref{lem.imbed} which implies that $\nabla \omega \in L^2_{\rm loc}(\RR^3\times\RR^+)$ and, in a consequence, we
have
 $\nabla u\in L^2_{\rm loc}\big((\RR^3\times\RR^+)\backslash\Gamma\big)$. Thus, using the classical local regularity criterion by Serrin~\cite{SER}
 we obtain that $u=u(x,t)$ is a smooth solution of the Navier-Stokes system~\eqref{eq.NS} outside the curve $\Gamma$, \textit{i.e.} $u\in C^\infty\big((\RR^3\times\RR^+)\backslash\Gamma\big)$.

 Next, we show the regularity of the pressure $p=p(x,t)$. Since
 \begin{equation*}
(V^c_\gamma\cdot\nabla)V^c_\gamma+\nabla Q^c_\gamma-\Delta V^c_\gamma=\kappa(c)\delta_{\gamma(t)}\bar{e}_1
 \end{equation*}
 in the sense of distribution, subtracting this equation from the equation for $u(x,t)$ in \eqref{eq.SNS} and using the divergence free condition, we obtain
 \begin{equation}\label{eq.pre-diff}
 \begin{split}
 -\Delta(p-Q^c_{\gamma})=&\Div \left((u\cdot\nabla) u-(V^c_\gamma\cdot\nabla) V^c_\gamma\right)\\
 =&\left(\nabla u\otimes\nabla u-\nabla V_{\gamma}^c\otimes\nabla V_{\gamma}^c\right)
 \end{split}
 \end{equation}
in the sense of distribution over $\RR^3\times\RR^+$. Since $p-Q^c_\gamma\in L^1_{\rm loc}(\RR^3\times\RR^+)$ and since
$\nabla u\otimes\nabla u-\nabla V_{\gamma}^c\otimes\nabla V_{\gamma}^c\in C^\infty\big((\RR^3\times\RR^+)\backslash\Gamma\big)$, we obtain $p-Q^c_\gamma\in C^\infty\big((\RR^3\times\RR^+)\backslash\Gamma\big)$ by  the Weyl regularity theorem (see {\it e.g.} \cite[Theorem IX.25]{RS75}).

 \textit{Step 4. Singularity of $u$ and $p$ at the curve.}
Now, we prove properties of $u$ and $p$  in Theorem \ref{THM.SNS}.\eqref{eq.thm-item-2}.
Since $\omega\in\mathcal{Y}_T^a$ for each $a\in[2,1+2\alpha]$, using the inequality in Lemma~\ref{lem:interpol} analogously as in Step 4 of the proof of Theorem \ref{THM.HEAT}, we obtain that
 $\omega(\cdot,t)=u(\cdot,t)-V^c(\cdot-\gamma(t))\in L^q(\RR^3)$ for each $q\in(3,\frac{3}{2-2\alpha})$ and all $t\in(0,T]$ with arbitrary $T>0$.

 To show the integrability of $p-Q^c_\gamma$ stated in \eqref{eq.thm-item-2} of Theorem \ref{THM.SNS}, we rewrite equation~\eqref{eq.pre-diff} in the following form
 \begin{equation}\label{eq.pressure-dif}
  -\Delta(p-Q^c_{\gamma})=\Div \left((\omega\cdot\nabla) \omega+(V^c_\gamma\cdot\nabla) \omega+(\omega\cdot\nabla) V^c_\gamma\right),
 \end{equation}
 where $V^c_\gamma\in L^\infty([0,\infty);\mathcal{PM}^2)$ and $\omega=u-V_\gamma^c\in \mathcal{Y}_T^{1+2\alpha}$. Now, by direct calculations applied to formula \eqref{eq.pressure-dif} involving properties of the Fourier transform and Lemma \ref{lem.product}, we obtain
$p(\cdot,t)-Q_\gamma^c(\cdot,t)\in L^\infty_{\rm loc}((0,\infty);\mathcal{PM}^1\cap\mathcal{PM}^2)$. Thus, by Lemma \ref{lem:interpol}, we get $p(\cdot,t)-Q^c_\gamma(\cdot,t)\in L^q(\RR^3)$ for all
$q\in\big[2,\frac{3}{3-2\alpha}\big)$. Since, moreover, $V^c_\gamma\in L^\infty\big([0,\infty);L^{\frac32,\infty}(\RR^3)\big)$, we have the imbedding $p-Q^c_\gamma\in L^q(\RR^3)$ for all $q\in\big(\frac32,\frac{3}{3-2\alpha}\big)$.
\end{proof}

\section*{Acknowledgments}
The authors are greatly indebted to Eiji Yanagida for sending them his works
on singular solutions to parabolic equations.
This work was partially supported by  the Foundation for Polish Science operated within the Innovative Economy Operational Programme 2007-2013 funded by European Regional Development Fund (Ph.D. Programme: Mathematical
Methods in Natural Sciences) and  the MNiSzW grant No.~N~N201 418839.

%%%%%%%%%%%%%%%%%%%%%%%%%%%%%%%%%%%%%%%%%%%%%%%%%%%%%%%%%%%%%%%%%%%%%%%%%%%%%%%%%%%%%%%%%%%%%%%%%%%%
\end{document}